\newcommand{\KK}{\mathbb K}
\newcommand{\QQ}{\mathbb Q}
\newcommand{\ZZ}{\mathbb Z}
\newcommand{\Ga}{\mathbb G_a}
\newcommand{\Aut}{\mathrm{Aut}}
\newcommand{\Ker}{\mathrm{Ker}\,}
\newcommand{\SAut}{\mathrm{SAut}}
\newcommand{\Xreg}{X^{\mathrm{reg}}}
\newcommand{\LND}{\mathrm{LND}}
\renewcommand{\phi}{\varphi}
\theoremstyle{plain}
\newtheorem{theorem}{Theorem}[section]
\newtheorem{lemma}[theorem]{Lemma}
\newtheorem{cor}[theorem]{Corollary}
\theoremstyle{remark}
\theoremstyle{definition}
\newtheorem{definition}[theorem]{Definition}
\newtheorem{example}[theorem]{Example}
\title{}
\author{Sergey Gaifullin}
\address{Lomonosov Moscow State University, Faculty of Mechanics and Mathematics, Department of Higher Algebra, Leninskie Gory 1, Moscow, 119991 Russia;
\linebreak
Moscow Center of Fundamental and Applied Mathematics, Moscow, Russia;  
\linebreak
HSE University, Faculty of Computer Science, Pokrovsky Boulvard 11, Moscow, 109028 Russia}
\email{sgayf@yandex.ru}
\author{Veronika Kikteva}
\address{HSE University, Faculty of Computer Science, 11 Pokrovsky Bulvar, Moscow, 109028, Russia}
\email{VVKikteva@yandex.ru}
\thanks{The article was prepared within the framework of the project “International Academic Cooperation”
HSE University.}
\subjclass[2020]{Primary 14J50, 14R20; \ Secondary 14M27, 13A50}
\keywords{Horospherical variety, automorphism group, flexible variety, generically flexible variety, locally nilpotent derivation}
\begin{document}
\title{Flexibility criterion for affine horospherical varieties}
\maketitle
\begin{abstract}
In this paper we obtain a criterion of flexibility for an affine complexity-zero horospherical variety. This result generalizes previously known results on flexibility of normal horospherical varieties, horospherical varieties with an action of a semisimple group, and non-normal toric varieties. 
\end{abstract}

\section{Introduction}
Let $\KK$ be an algebraically closed field of characteristic zero. For an affine variety $X$ over~$\KK$ its group of regular automorphisms $\Aut(X)$ is, in general, not an algebraic group. This leads to some interesting phenomena related to automorphism groups that do not appear in theory of algebraic groups. One such phenomenon is that $\Aut(X)$ can act {\it infinitely transitively} on an orbit, that is, for each $m\in\ZZ_{>0}$ it acts transitively on the set of $m$-tuples of distinct points of the orbit. 

An algebraic subgroup of $\Aut(X)$ is called a $\Ga$-subgroup if it is isomorphic to the additive group of the ground field $\KK$. We denote by $\SAut(X)$ the subgroup of $\Aut(X)$ generated by all $\Ga$-subgroups and call it the {\it group of special automorphisms}. An affine algebraic variety~$X$ is called \textit{flexible} if every regular point $x\in\Xreg$ is flexible, that is, its tangent space $\mathrm{T}_x X$ is spanned by tangent vectors to $\Ga$-orbits. According to~\cite[Theorem~0.1]{AFKKZ-1}, for an affine variety $X$ of dimension $\geq 2$, flexibility of $X$ is equivalent to transitivity of $\SAut(X)$-action on the regular locus $\Xreg$ and is equivalent to infinite transitivity of $\SAut(X)$-action on~$\Xreg$. This fact inspired a series of works proving flexibility of different classes of varieties; see~\cite{A-8,AFKKZ-1, AKZ-1, APS, BKK, BG, BGS, D, FKZ, G, GSh, K, KM, MPS, PW, P1, P2, Sh-1}.

A popular class of varieties is formed by the closures of orbits of algebraic group actions. 
Note that it is a complicated question whether the closure of a given orbit is normal. Thus, it is natural to study non-normal orbits as well.
Some automorphisms of such a variety~$X$ are given by the group action. In general, these automorphisms are not sufficient to prove flexibility of $X$. However, often the variety $X$ has some extra automorphisms that allow one to prove flexibility. In this work we consider affine {\it complexity-zero horospherical varieties}, that is, affine varieties with an action of a connected linear algebraic group $G$ such that there is an open $G$-orbit $\mathcal{O}$ and the stabilizer of a point $x\in \mathcal{O}$ contains a maximal unipotent subgroup $U \subseteq G$. This class includes all toric varieties and is contained in the class of spherical varieties. A classification of complexity-zero horospherical affine varieties is obtained in~\cite{PV}. It is proved there that such varieties are in bijection with finitely generated subsemigroups of the semigroup of dominant weights. Many properties of a variety can be described in terms of the corresponding semigroup. We gather some preliminary information on horospherical varieties in Section~\ref{subsecthorospher}.

There is a series of works devoted to flexibility of certain subclasses in the class of horospherical varieties. The first step
is done in~\cite{AKZ-1}, where flexibility of non-degenerate normal toric varieties and cones over flag varieties is proved. Subsequently, in~\cite{Sh-1}, flexibility of (possibly non-normal) complexity-zero horospherical varieties corresponding to a semisimple group~$G$ is proved. In~\cite{GSh}, it is proved that every normal complexity-zero horospherical variety with only constant invertible functions is flexible in the case of an arbitrary algebraic group~$G$. In~\cite{BG}, a criterion for a not necessarily normal toric variety to be flexible is obtained. It is also worth mentioning that in~\cite{BGS} all orbits of the group $\mathrm{AAut}(X)$ on an arbitrary complexity-zero horospherical variety are described, where $\mathrm{AAut}(X)$ is the group generated by all connected algebraic subgroups of the automorphism group $\Aut(X)$. However, this description is in terms of degrees of homogeneous locally nilpotent derivations on~$\KK[X]$. In the non-toric case it is an open problem to find all these degrees. Therefore, this result does not yield a criterion for flexibility. 

It is easy to see that for any horospherical variety one can assume that $G=G'\times L$, where~$G'$ is a semisimple group and $L\cong (\KK^\times)^m$ is an algebraic torus. The results above give a criterion for a complexity-zero horosperical variety $X$ to be flexible if one of the factors~$G'$ or $L$ is trivial and for arbitrary $G$ in case when $X$ is normal. So the remaining case is that of a non-normal complexity-zero horospherical variety for which both factors are nontrivial. In the present work we fill this gap and obtain a criterion of flexibility in the general case, see Theorem~\ref{thmcrit}. All the previous results mentioned above can be obtained as particular cases of this criterion. The statement and the plan of the proof of this result are rather similar to the criterion of flexibility of toric varieties given in~\cite{BG}. However, the technique is more involved. So, the proof is based on new ideas and combining  techniques from all the papers mentioned above. 

A variety $X$ is called {\it generically flexible} if it possesses at least one flexible point. This condition is equivalent to $X$ admitting an open orbit consisting of flexible points. This is also equivalent to triviality of field Makar-Limanov invariant. It is known that generic flexibility of $X$ is not equivalent to flexibility. Examples of generically flexible but not flexible varieties are given in~\cite{Ko}; see also~\cite{G2}. However, in~\cite{GSh}, a technique for moving a regular point from an orbit to the open one is developed for complexity-zero horospherical varieties. This shows that for complexity-zero horospherical varieties flexibility and generic flexibility are equivalent. So, the main part of this work is to describe 
generically flexible complexity-zero horospherical varieties.

The authors are grateful to Ivan Arzhantsev and Roman Avdeev for useful discussions.

\section{Preliminaries}
\subsection{Locally nilpotent derivations} 
In this section we recall some necessary definitions related to locally nilpotent derivations and the correspondence between locally nilpotent derivations of an algebra and $\Ga$-actions; see~\cite{Fr} for detailed information and proofs.

Let $B$ be a $\KK$-domain. A \textit{derivation} of~$B$ is a linear map $\delta: B \to B$ satisfying the Leibniz rule $\delta(ab)=a\delta(b)+b\delta(a)$. A derivation $\delta$ of $B$ is called \textit{locally nilpotent} (LND) if for every $b \in B$ there exists a positive integer $m$ such that~$\delta^m(b) = 0$. The set of all locally nilpotent derivations of $B$ is denoted by $\LND(B)$.

Given an LND $\delta$ of $B$, one can construct an algebraic action of the group $\Ga$ on $B$ via the exponential map: $$s\cdot b=\mathrm{exp}(s\delta)(b)=\sum_{i=0}^{\infty}\frac{s^i}{i!}\delta^i(b)$$ 
for all $s \in \KK$ and $b \in B$. Conversely, every algebraic $\Ga$-action on $B$ determines an LND $\delta$ of $B$ given by 
$$\delta(b)=\frac{s\cdot b-b}{s}\Big|_{s=0}.$$ This correspondence is bijective. Moreover, the kernel of an LND coincides with the algebra of invariants of the corresponding $\Ga$-action. When $B$ is finitely generated, it is isomorphic to the coordinate ring of the affine algebraic variety $\mathrm{Spec}(B)$. Since every automorphism~$\phi$ of the variety $\mathrm{Spec}(B)$ induces an automorphism $\phi^*$ of the algebra $B$, and this correspondence is bijective, there exists a one-to-one correspondence between LNDs on $B$ and algebraic $\Ga$-actions on $\mathrm{Spec}(B)$.

Let $n$ be the transcendence degree of $B$ over $\KK$. The transcendence degree of the kernel of an LND $\delta$ equals $n-1$. In particular, if $A\subseteq \Ker \delta$ is an algebraically closed subalgebra with $\mathrm{tr.deg}_{\KK}A=n-1$, then $\Ker\delta=A$. 

Suppose $B$ is equipped with a grading by an abelian group $F$: 
$$B=\bigoplus_{f\in F}B_f,\text{ where } B_{f_1}B_{f_2}\subseteq B_{f_1+f_2} \text{ for all }f_1,f_2\in F .$$ A derivation $\delta$ of $B$ is called \textit{homogeneous} if it takes homogeneous elements to homogeneous ones. It can be shown that for any homogeneous derivation $\delta$ there exists an element $d \in F$ such that $\delta (B_f)\subseteq B_{f+d}$ for all $f\in F$. If $\delta \neq 0$, then such an element $d$ is uniquely determined and is called the \textit{degree} of $\delta$.
If $B$ is finitely generated, each derivation can be decomposed into a sum of homogeneous ones, which are called~{\it homogeneous components of the derivation}. 

Suppose we have a $\ZZ$-grading on a finitely generated $\KK$-domain $B$, and let $\delta$ be an LND on $B$. Then $\delta=\sum_{i=l}^k\delta_i$, where $\delta_i$ is a homogeneous derivation of degree $i$. We assume $\delta_l\neq 0$ and $\delta_k\neq 0$. Then, by~\cite{Re}, the derivations $\delta_l$ and $\delta_k$ are locally nilpotent. Applying this assertion $n$ times, we obtain the following lemma.

\begin{lemma}\label{pslemmar}
    Suppose we have a $\ZZ^n$-grading on a finitely generated $\KK$-domain $B$. Let $\delta$ be an LND of $B$. Consider the convex hull $P$ of degrees of nonzero homogeneous components of $\delta$. Then the homogeneous components of $\delta$ corresponding to the vertices of $P$ are locally nilpotent.
\end{lemma}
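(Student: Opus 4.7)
My plan is to reduce the $\ZZ^n$-graded case to the $\ZZ$-graded assertion quoted just above the lemma (Renschler's result that the extremal homogeneous components of an LND with respect to a $\ZZ$-grading are themselves LNDs). The idea is to pick, for each vertex $v$ of $P$, a linear functional that singles $v$ out as the unique maximum among the degrees appearing in $\delta$, and then collapse the $\ZZ^n$-grading along this functional to obtain a $\ZZ$-grading in which $\delta_v$ becomes the top-degree component.

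First I would fix a vertex $v$ of $P$. Because $P$ is the convex hull of a finite subset of $\ZZ^n$, there exists an integer linear functional $\ell : \ZZ^n \to \ZZ$ whose maximum on $P$ is attained only at $v$; equivalently, $\ell(v) > \ell(d)$ for every other degree $d$ of a nonzero homogeneous component of $\delta$. This is standard convex geometry: choose a supporting hyperplane that separates $v$ from the other vertices, and, since only finitely many degrees appear, perturb it slightly to an integer functional preserving all the strict inequalities.

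Second, using $\ell$ I would coarsen the $\ZZ^n$-grading to a $\ZZ$-grading $B = \bigoplus_{j \in \ZZ} \tilde{B}_j$ by setting $\tilde{B}_j = \bigoplus_{\ell(d) = j} B_d$; the algebra structure is compatible because $\ell$ is additive. In this $\ZZ$-grading, the homogeneous component of $\delta$ of top degree $\ell(v)$ is exactly $\delta_v$, since $v$ is the only degree with $\ell$-value equal to $\ell(v)$. Applying the $\ZZ$-graded result of Renschler now gives that $\delta_v$ is locally nilpotent. Running this argument for each vertex of $P$ finishes the proof.

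The only non-trivial step is the first one, the existence of the separating integer functional; everything else is a one-line application of the already-known $\ZZ$-graded statement. I do not anticipate a genuine obstacle, which is consistent with the remark in the paper that the lemma follows by applying the $\ZZ$-graded assertion $n$ times, an inductive bookkeeping of the same idea (peel off a face of $P$ via one coordinate, restrict to the sublattice, and iterate until a single vertex is isolated).
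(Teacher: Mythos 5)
Your proof is correct and is essentially the paper's argument: both reduce the claim to Rentschler's statement, quoted just before the lemma, that the extremal homogeneous components of an LND with respect to a $\ZZ$-grading are again locally nilpotent. Your single coarsening along an integer functional that strictly separates the chosen vertex is in fact the cleaner way to make the paper's ``applying this assertion $n$ times'' precise, since a naive coordinate-by-coordinate iteration would only isolate lexicographically extremal degrees rather than every vertex of $P$.
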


\subsection{Rational polyhedral cones}
Let $V$  be a finite-dimensional $\QQ$-vector space. A subset $\sigma\subseteq V$ is called a \textit{finitely generated cone} if there exists a finite set $v_1,\dots,v_k\in V$ such that $\sigma = \QQ_{\geq 0}v_1+\dots+\QQ_{\geq 0}v_k$. All cones considered in this paper are assumed to be finitely generated. 

Let $\sigma\subseteq V$ be a finitely generated cone. Denote by $\langle\cdot,\cdot\rangle$ the natural pairing between $V$ and the dual space $V^*$. Consider the \textit{dual cone} $\sigma^\vee$ in $V^*$, defined by $$\sigma^\vee:=\{ w\in V^* \mid \langle w,v \rangle\geq 0 \text{ for all } v\in \sigma\}.$$ 
Since the cone $\sigma$ is finitely generated, the cone $\sigma^\vee$ is also finitely generated and we have $(\sigma^\vee)^{\vee}=\sigma$. A \textit{face} $\tau$ of $\sigma$ is a subset of $\sigma$ given by $$\tau=\sigma \cap u^\perp =\{ v\in \sigma\mid \langle u,v \rangle =0 \}$$ for some $u\in \sigma^\vee$. Each face is also a finitely generated cone. 

The \textit{dimension} of $\sigma$ is the dimension of its linear span $\QQ \sigma$. A cone $\sigma\subseteq V$ is called \textit{strictly convex} if $\sigma\cap (-\sigma)=\{0\}$. A cone $\sigma$ is strictly convex if and only if the cone $\sigma^\vee$ has dimension equal to the dimension of~$V$. An \textit{extremal ray} of $\sigma$ is a face of dimension one. If $\sigma$ is strictly convex, its extremal rays are cones $\QQ_{\geq 0}v_i$ for a minimal set of generators $\{v_1,\ldots v_k\}$. 

There is a one-to-one correspondence between faces of $\sigma$ and faces of $\sigma^\vee$. A face $\tau\preceq \sigma$ correspons to the face 
$$\widehat{\tau}=\sigma^\vee\cap \tau^\bot=\{u\in\sigma^\vee\mid \langle v,u\rangle=0\ \forall v\in\tau\}\preceq\sigma^\vee.$$  
Note that $\dim \tau+\dim \widehat{\tau}=\dim V$. 

Let $N$ be a lattice and $N_\QQ=N\otimes_{\ZZ}\QQ$ be a rational vector space spanned by $N$. Consider the lattice~$M=\mathrm{Hom}_\ZZ(N,\ZZ)$ and the vector space $M_\QQ=M\otimes_{\ZZ}\QQ=N_\QQ^*$. Let $\sigma$ be a strictly convex cone in $N_\QQ$. Denote by $v_1,\ldots, v_k$ the primitive integer vectors on extremal rays $\rho_1,\ldots, \rho_k$ of~$\sigma$. For each ray $\rho_i=\QQ_{\geq 0}v_i$ we define $$\mathfrak{R}_\rho(\sigma) := \{ e\in M \mid \langle v_i,e \rangle =-1 \text{ and }  \langle v_j,e \rangle \geq 0 \text{ for all } j\neq i\}.$$ Elements of $\mathfrak{R}_\rho(\sigma)$ are called \textit{Demazure roots} of $\sigma$ {\it with distinguished ray} $\rho$.

By a semigroup we mean a finitely generated semigroup with unit. Consider a subsemigroup $\mathfrak{F}\subseteq M$ and the corresponding cone $\sigma^\vee:=\QQ_{\geq 0}\mathfrak{F}\subseteq M_\QQ$. We say that $\mathfrak{F}$ is \textit{saturated} if $\sigma^\vee\cap \ZZ \mathfrak{F}= \mathfrak{F}$. The semigroup $\mathfrak{F}_{sat}:=\sigma^\vee\cap \ZZ \mathfrak{F}$ is called the \textit{saturation} of~$\mathfrak{F}$ and elements of $\mathfrak{F}_{sat}\backslash \mathfrak{F}$ are called \textit{holes} of $\mathfrak{F}$. The following definitions are introduced in~\cite{TY}. An element $m$ of $\mathfrak{F}$ is said to be a \textit{saturation point} of $\mathfrak{F}$ if $(m+\sigma^\vee)\cap \ZZ \mathfrak{F}\subseteq \mathfrak{F}$, that is, the shifted cone $m+\sigma^\vee$ has no holes. A face $\tau$ of $\sigma^\vee$ is called \textit{almost saturated} if there exists a saturation point of $\mathfrak{F}$ in $\tau$. Otherwise $\tau$ is called \textit{nowhere saturated}.

\subsection{Horospherical varieties}
\label{subsecthorospher}

This section contains some preliminary information on horospherical varieties; see~\cite{PV, Ti-2} for proofs.

\begin{definition}
An irreducible algebraic variety $X$ is \textit{horospherical} if it admits an action of a connected linear algebraic group $G$ such that the stabilizer of a generic point contains a maximal unipotent subgroup of $G$. We say that a horospherical variety $X$ is \textit{complexity-zero} if the action of $G$ on $X$ possesses an open orbit $\mathcal{O}$. 
\end{definition}
If $X$ is an affine complexity-zero horospherical variety, then the unipotent radical of~$G$ acts trivially on $X$. Hence we may assume that $G$ is reductive. From now on by horospherical variety we mean a complexity-zero horospherical variety equipped with an action of a reductive group $G$. Moreover, coming up to a finite extension of the group we can assume that $G=G'\times L$, where $G'$ is a semisimple group and $L\cong (\KK^\times)^s$ is a torus.  

Let $B$ be a fixed Borel subgroup of $G$. Since $G=G'\times L$, we have $B=\widehat{B}\times L$, where $\widehat{B}$ is a Borel subgroup in $G'$. Let us denote by $\widehat{T}$ a maximal torus in $\widehat{B}$. Then $T=\widehat{T}\times L$ is a maximal torus in $B$.
For each character $\Lambda \in \mathfrak{X}(B)$ consider the weight subspace $$S_\Lambda := \{f \in \KK[G] \mid f(gb) = \Lambda(b)f(g) \text{ for all } g \in G, b \in  B \}.$$ The set $$\mathfrak{X}^{+}(B):=\{\Lambda \in \mathfrak{X}(B) \mid S_{\Lambda} \neq \{0\}\}$$ coincides with the set of dominant weights of $G$.

Let $\mathfrak{F} \subseteq \mathfrak{X}^+(B)$ be a (finitely generated) subsemigroup (with unit). Then the spectrum $X=X(\mathfrak{F})$ of the algebra $$S_{\mathfrak{F}}:=\bigoplus_{\Lambda\in \mathfrak{F}}S_{\Lambda}$$ is an affine horospherical $G$-variety, and all affine horospherical $G$-varieties can be obtained in such a way.

The affine horospherical variety $X=X(\mathfrak{F})$ can be obtained by the following construction. Let $\{\Lambda_1,\ldots, \Lambda_k\}$ be a system of generators of $\mathfrak{F}$. Let us consider the direct sum of irreducible representations of $G$ dual to representations with the highest weights $\Lambda_1,\ldots, \Lambda_k$:
$$
V=V(\Lambda_1)^*\oplus\ldots\oplus V(\Lambda_k)^*.
$$
Let $v_i$ be the highest vector of $V(\Lambda_i)^*$. Put $v=v_1+\ldots+v_k\in V$. Then $X$ is isomorphic to the orbit closure $\overline{Gv}$. 

Let $M:=\ZZ \mathfrak{F}$ be the lattice generated by $\mathfrak{F}$ and $N=\mathrm{Hom}(M,\ZZ)$ be its dual lattice. Consider the rational vector spaces $M_\QQ=M\otimes_{\ZZ}\QQ$ and $N_\QQ=N\otimes_{\ZZ}\QQ=M_\QQ^*$. Let $\sigma^{\vee}$ be the cone generated by~$\mathfrak{F}$, that is, 
$$\sigma^{\vee}:=\mathbb{Q}_{\geq 0} \mathfrak{F} \subseteq M_\QQ.$$  
By $\sigma$ we denote the cone in $N_{\QQ}$ dual to $\sigma^{\vee}$. 
Note that, by definition, the cone $\sigma^\vee$ is not contained in any hyperplane in $M_\QQ$. Hence $\sigma$ is strictly convex.

There is a one-to-one correspondence between faces of $\sigma$ and $G$-orbits on $X$. In particular, the number of orbits is finite. Namely, if $O_{\tau}\subseteq X$ is a $G$-orbit on $X$, corresponding to some face $\tau$ of $\sigma$, then the vanishing ideal of its closure $\overline{O_{\tau}}$ has the form $$I_\tau=I(O_{\tau})=\bigoplus_{\Lambda\in \mathfrak{F}\backslash \widehat{\tau}}S_{\Lambda}.$$
The subvariety $\overline{O_{\tau}}$ is a horospherical variety of the same group corresponding to the semigroup $\mathfrak{F}\cap \widehat{\tau}$. It is easy to see that $O_{\tau_1}\subseteq \overline{O_{\tau_2}}$ if and only if $\tau_2\preceq \tau_1$.

We need the following formula for the dimension of a horospherical variety given in~\cite[Section~3.2]{PV}. 

\begin{lemma}\label{lemorbit}
     Let $X$ be the horospherical variety corresponding to a subsemigroup $\mathfrak{F}\subseteq \mathfrak{X}^+(B)$. Denote by $\Delta(\mathfrak{F})$ the number of positive roots of $G'$ that can be obtained as sums of such simple roots that have nonzero inner product with at least one element of $\mathfrak{F}$. Then $\dim X=\dim\sigma^\vee+\Delta(\mathfrak{F})$.
\end{lemma}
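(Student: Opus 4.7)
The plan is to realize $X$ as the closure of a single $G$-orbit in a representation and to compute the dimension of the generic stabilizer. By the construction recalled just above the lemma, $X$ is $G$-equivariantly isomorphic to $\overline{Gv}$ inside $V = V(\Lambda_1)^* \oplus \dots \oplus V(\Lambda_k)^*$, with $v = v_1 + \dots + v_k$ and each $v_i$ a highest weight vector of $V(\Lambda_i)^*$ (of weight $\mu_i := -w_0 \Lambda_i$, where $w_0$ is the longest element of the Weyl group of $G'$). Since $Gv$ is the open $G$-orbit in $X$, we have $\dim X = \dim G - \dim H$, where $H := \mathrm{Stab}_G(v)$.

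Taking the generators $\Lambda_i$ pairwise distinct, the summands $V(\Lambda_i)^*$ are pairwise non-isomorphic irreducible $G$-modules, so $g \cdot v = v$ forces $g \cdot v_i = v_i$ for every $i$, whence $H = \bigcap_{i=1}^k \mathrm{Stab}_G(v_i)$. By standard facts on highest-weight orbits, $\mathrm{Stab}_G(\KK v_i)$ is the standard parabolic $P_i \supseteq B$ whose Levi subgroup has simple roots $\{\alpha \in \Pi : \langle \mu_i, \alpha^\vee\rangle = 0\}$; the weight $\mu_i$ extends to a character of $P_i$, and $\mathrm{Stab}_G(v_i) = \Ker(\mu_i : P_i \to \KK^\times)$. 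Setting $P := \bigcap_i P_i$, which is the standard parabolic whose Levi has simple roots $I' := \{\alpha \in \Pi : \langle \mu_i, \alpha^\vee\rangle = 0 \text{ for all } i\}$, each $\mu_i$ remains a character of $P$ and
\[
H = \Ker\bigl((\mu_1, \dots, \mu_k) : P \to (\KK^\times)^k\bigr).
\]
The rank of the image equals $\mathrm{rk}\,\ZZ\{\mu_i\} = \mathrm{rk}\,M = \dim \sigma^\vee$, because $-w_0$ is a lattice automorphism of $\mathfrak{X}(T)$ and $\sigma^\vee$ spans $M_\QQ$; hence $\dim H = \dim P - \dim \sigma^\vee$.

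Substituting the standard formulas $\dim G = \dim T + 2|\Phi^+|$ and $\dim P = \dim T + |\Phi^+| + |\Phi^+_{I'}|$ yields $\dim X = |\Phi^+| - |\Phi^+_{I'}| + \dim \sigma^\vee$. It remains to identify $|\Phi^+| - |\Phi^+_{I'}|$ with $\Delta(\mathfrak{F})$. A direct computation using dominance of the $\Lambda_i$ shows that a positive root $\alpha$ satisfies $\langle \Lambda, \alpha^\vee\rangle = 0$ for every $\Lambda \in \mathfrak{F}$ if and only if its simple-root support lies entirely in $I := \Pi \setminus J$, where $J$ is as in the lemma; equivalently, the positive roots in $\Phi^+ \setminus \Phi^+_I$ are exactly those counted by $\Delta(\mathfrak{F})$. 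Since $-w_0$ permutes simple roots and preserves the root system, $|\Phi^+_{I'}| = |\Phi^+_I|$, and the formula follows. The principal subtlety is the $-w_0$-bookkeeping relating $\mu_i$-side parabolic data to the $\Lambda_i$-side definition of $J$; the rest reduces to standard representation theory of highest weight modules.
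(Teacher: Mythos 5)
The paper itself gives no proof of this lemma: it is imported from~\cite[Section~3.2]{PV} as a known fact. Your argument is a correct, self-contained derivation of that formula, and it follows the standard route behind the reference: realize $X$ as $\overline{Gv}$, observe that $g\cdot v=v$ forces $g\cdot v_i=v_i$ (for this you do not even need the summands to be pairwise non-isomorphic --- each $V(\Lambda_i)^*$ is a $G$-submodule, so comparing components of $g\cdot v$ already suffices), identify the stabilizer as the kernel of $(\mu_1,\dots,\mu_k)$ on the standard parabolic $P=\bigcap_i P_i$, and count dimensions. All the individual steps check out: $\dim P/H$ equals the rank of the subgroup of $\mathfrak{X}(P)\hookrightarrow\mathfrak{X}(T)$ generated by the $\mu_i$, which is $\mathrm{rk}\,M=\dim\sigma^\vee$ since $\sigma^\vee$ spans $M_\QQ$; the dominance argument shows a positive root pairs to zero with all of $\mathfrak{F}$ exactly when its simple-root support lies in $I$; and $-w_0$ is a diagram automorphism, so $|\Phi^+_{I'}|=|\Phi^+_{I}|$. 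One point worth making explicit: you silently resolve the ambiguity in the wording of $\Delta(\mathfrak{F})$ in the only way that makes the lemma true, namely $\Delta(\mathfrak{F})$ counts the positive roots whose decomposition into simple roots involves \emph{at least one} simple root non-orthogonal to $\mathfrak{F}$ (so $\Delta(\mathfrak{F})=|\Phi^+|-|\Phi^+_I|$, not $|\Phi^+_J|$; the two readings already differ for $G'=\mathrm{SL}_3$ and $\mathfrak{F}=\ZZ_{\geq 0}\omega_1$, where $\dim X=3=1+2$). With that reading fixed, your proof is complete.
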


This lemma implies the following describtion of orbits of codimension one. Let $\mathcal{C}$ be the fundamental Weyl chamber, that is the cone in $\mathfrak{X}(B)\otimes_\ZZ\QQ$ generated by $\mathfrak{X}^+(B)$. We denote by $\theta^\vee$ the cone $\mathcal{C}\cap M_\QQ$. Consider $\theta\subseteq N_\QQ$ dual to $\theta^\vee$.

\begin{lemma}\label{arl}
    Orbits of codimension one in $X$ correspond to extremal rays $\rho$ of $\sigma$ that are not extremal rays of $\theta$.
\end{lemma}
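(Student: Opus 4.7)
The plan is to apply Lemma~\ref{lemorbit} to every orbit $O_\tau$ and read off when its codimension in $X$ equals one. Since $\overline{O_\tau}$ is the horospherical variety attached to $\mathfrak{F}\cap\widehat{\tau}$, whose cone is $\widehat{\tau}$, and since $\dim\tau+\dim\widehat{\tau}=\dim\sigma^\vee$, the formula yields
\[
\mathrm{codim}_X O_\tau=\dim\tau+\bigl(\Delta(\mathfrak{F})-\Delta(\mathfrak{F}\cap\widehat{\tau})\bigr).
\]
Both summands are nonnegative: the second because every simple root having nonzero pairing with an element of $\mathfrak{F}\cap\widehat{\tau}$ also does so with an element of $\mathfrak{F}$. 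Thus $\mathrm{codim}_X O_\tau=1$ forces $\dim\tau=1$, so $\tau=\rho$ is an extremal ray of $\sigma$, together with $\Delta(\mathfrak{F})=\Delta(\mathfrak{F}\cap\widehat{\rho})$, which amounts to equality of the underlying sets of simple roots.

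To analyze when this second equality holds, let $v$ be the primitive generator of $\rho$ and observe that the generators of $\mathfrak{F}$ lying in the face $\widehat{\rho}\preceq\sigma^\vee$ already generate $\widehat{\rho}$ as a cone. Hence a simple root $\alpha_i$ appears in the set attached to $\mathfrak{F}\cap\widehat{\rho}$ iff $\widehat{\rho}\not\subseteq\ker\alpha_i^\vee$, where $\alpha_i^\vee$ is restricted to $M_\QQ$; write $\bar{\alpha}_i^\vee\in N_\QQ$ for this restriction, so that $\alpha_i$ appears in the set attached to $\mathfrak{F}$ iff $\bar{\alpha}_i^\vee\ne 0$. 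Since $\widehat{\rho}$ is a facet of $\sigma^\vee$ spanning the hyperplane $v^\perp\subseteq M_\QQ$, a root $\alpha_i$ belongs to the second set but not to the first precisely when $\bar{\alpha}_i^\vee$ is a nonzero scalar multiple of $v$. Both lie in the strictly convex cone $\sigma$, so the scalar is positive, giving $\rho=\QQ_{\geq 0}\bar{\alpha}_i^\vee$.

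The final step is to rephrase this condition in terms of $\theta$. From $\sigma^\vee\subseteq\mathcal{C}\cap M_\QQ=\theta^\vee$ one deduces $\theta\subseteq\sigma$, and projecting $\mathcal{C}^\vee$ from $\mathfrak{X}(T)^*\otimes\QQ$ to $N_\QQ$ identifies $\theta$ with $\QQ_{\geq 0}\{\bar{\alpha}_1^\vee,\ldots,\bar{\alpha}_r^\vee\}$. An extremal ray of $\sigma$ lying inside $\theta$ is automatically extremal in $\theta$, since a supporting hyperplane of $\sigma$ cutting out $\rho$ also supports the smaller cone $\theta$ and meets it exactly in $\rho$. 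Combining the two previous steps, $\mathrm{codim}_X O_\rho=1$ iff $\rho$ is an extremal ray of $\sigma$ that is not an extremal ray of $\theta$, proving the lemma. The main technical care will be to distinguish consistently between the simple coroots $\alpha_i^\vee$ and their restrictions $\bar{\alpha}_i^\vee$ to $M_\QQ$, as this is precisely what links the ambient Weyl-chamber combinatorics to the intrinsic combinatorics of $\sigma^\vee$.
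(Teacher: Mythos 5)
Your proof is correct and follows essentially the same route as the paper: both apply Lemma~\ref{lemorbit} to write $\mathrm{codim}_X O_\tau=\dim\tau+\bigl(\Delta(\mathfrak{F})-\Delta(\mathfrak{F}\cap\widehat{\tau})\bigr)$ with both summands nonnegative, and then translate the resulting equality $\Delta(\mathfrak{F})=\Delta(\mathfrak{F}\cap\widehat{\rho})$ into the condition that $\rho$ is not an extremal ray of $\theta$. Your unpacking of that last equivalence via the restricted simple coroots $\bar{\alpha}_i^\vee$ generating $\theta$ is more explicit than the paper's one-line appeal to faces of $\mathcal{C}$ and the duality between faces of $\theta$ and $\theta^\vee$, but it is the same argument in substance.
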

\begin{proof}
    Let $\tau$ be a face of $\sigma$ corresponding to an orbit of codimension one. Since 
$$\dim\widehat{\tau}+\Delta(\mathfrak{F}\cap\widehat{\tau})=\dim \overline{O_\tau}=\dim X-1=\dim\sigma^\vee+\Delta(\mathfrak{F})-1,$$ 
 $\dim \widehat{\tau}=\dim \sigma^\vee-\dim\tau\leq \dim \sigma^\vee-1$,  and $\Delta(\mathfrak{F}\cap\widehat{\tau})\leq \Delta(\mathfrak{F})$, we have $\dim\tau=1$ and $\Delta(\mathfrak{F}\cap\widehat{\tau})= \Delta(\mathfrak{F})$. The condition $\Delta(\mathfrak{F}\cap\widehat{\tau})= \Delta(\mathfrak{F})$ is equivalent to the fact that if $\widehat{\tau}$ is contained in a face of $\mathcal{C}$, then $\sigma^\vee$ is contained in this face. That is, $\widehat{\tau}$ is not contained in a proper face of $\theta^\vee$. That is $\tau$ is not an extremal ray of $\theta$.

Conversely, if $\rho$ is an extremal ray of $\sigma$ that is not an extremal ray of $\theta$, then $\Delta(\mathfrak{F}\cap\widehat{\rho})= \Delta(\mathfrak{F})$ and $\dim \overline{O_\rho}=\dim X-1$. 
\end{proof}

\begin{lemma}
\label{lemnormaliz}
The integer closure of $S_{\mathfrak{F}}$ in its quotient field coincides with $\bigoplus\limits_{\Lambda\in\mathfrak{F}_{sat}} S_\Lambda$.
\end{lemma}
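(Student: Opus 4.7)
The plan is to establish the equality by proving two inclusions. Write $A := S_{\mathfrak{F}}$, $B := \bigoplus_{\Lambda \in \mathfrak{F}_{sat}} S_\Lambda$, and let $\widetilde{A}$ denote the integral closure of $A$ in $K := \mathrm{Frac}(A)$.

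For the inclusion $B \subseteq \widetilde{A}$, the key observation is that every element of $\mathfrak{F}_{sat}$ has a positive integer multiple in $\mathfrak{F}$. Indeed, since $\mathfrak{F}_{sat} \subseteq \sigma^\vee = \QQ_{\geq 0}\mathfrak{F}$, any $\Lambda \in \mathfrak{F}_{sat}$ is a non-negative rational combination of generators of $\mathfrak{F}$, and clearing denominators yields $n\Lambda \in \mathfrak{F}$ for some $n \in \ZZ_{>0}$. The multiplication rule $S_{\Lambda_1}\cdot S_{\Lambda_2} \subseteq S_{\Lambda_1+\Lambda_2}$ then gives $f^n \in S_{n\Lambda} \subseteq A$ for every $f \in S_\Lambda$, so $f$ is a root of the monic polynomial $X^n - f^n \in A[X]$. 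This simultaneously shows $B \subseteq \widetilde{A}$ and $B \subseteq K$, so that $\mathrm{Frac}(A) = \mathrm{Frac}(B)$.

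For the reverse inclusion $\widetilde{A} \subseteq B$, I would reduce to the assertion that $B$ is integrally closed in its own fraction field. Granting this, every element of $\widetilde{A}$ is integral over $A \subseteq B$, hence integral over $B$, and lies in $\mathrm{Frac}(B) = K$, so it lies in $B$. Normality of $B$ is equivalent to normality of the horospherical variety $X(\mathfrak{F}_{sat})$. The semigroup $\mathfrak{F}_{sat}$ itself is saturated, since from $\mathfrak{F} \subseteq \mathfrak{F}_{sat} \subseteq \ZZ \mathfrak{F}$ one gets $\ZZ \mathfrak{F}_{sat} = \ZZ \mathfrak{F}$ and hence $(\mathfrak{F}_{sat})_{sat} = \sigma^\vee \cap \ZZ \mathfrak{F} = \mathfrak{F}_{sat}$. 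By the Popov--Vinberg classification~\cite{PV}, an affine horospherical $G$-variety is normal if and only if its defining subsemigroup of dominant weights is saturated, which supplies the required normality.

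The main obstacle is the normality statement for $X(\mathfrak{F}_{sat})$; once that is available, the rest of the argument is essentially formal and rests only on the relation $\mathfrak{F}_{sat} = \sigma^\vee \cap \ZZ \mathfrak{F}$ together with the multiplicative structure $S_{\Lambda_1} S_{\Lambda_2} \subseteq S_{\Lambda_1+\Lambda_2}$.
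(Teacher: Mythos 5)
Your overall strategy is sound and genuinely different in presentation from the paper's. The paper argues in two steps: by the proof of \cite[Proposition~6]{PV} the integral closure of $S_{\mathfrak{F}}$ in its quotient field sits inside the factorial (hence normal) algebra $S=\bigoplus_{\Lambda\in\mathfrak{X}^+(B)}S_\Lambda$, and then \cite[Proposition~4]{PV} identifies the integral closure of $S_{\mathfrak{F}}$ inside $S$ as $\bigoplus_{\Lambda\in\mathfrak{F}_{sat}}S_\Lambda$. You instead verify directly that $B=\bigoplus_{\Lambda\in\mathfrak{F}_{sat}}S_\Lambda$ is integral over $A=S_{\mathfrak{F}}$ and invoke the Popov--Vinberg normality criterion for the (saturated, and by Gordan's lemma still finitely generated) semigroup $\mathfrak{F}_{sat}$ to get the reverse inclusion. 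Both routes ultimately lean on \cite{PV}; yours trades the explicit integral-closure computation for the normality criterion, which is cleaner to state but is itself usually proved via that computation, so it buys brevity at the cost of pushing the real content into the citation.

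There is one step whose justification does not work as written. From $f^n\in A$ you conclude ``simultaneously'' that $f$ is integral over $A$ \emph{and} that $f\in K=\mathrm{Frac}(A)$. The first conclusion is fine, but the second is a non sequitur: a root of a monic polynomial over $A$ need not lie in $\mathrm{Frac}(A)$ (compare $\sqrt{2}$ and $X^2-2$ over $\QQ$). Since $\widetilde{A}$ is by definition the integral closure of $A$ \emph{inside} $K$, and since your reverse inclusion also needs $\mathrm{Frac}(A)=\mathrm{Frac}(B)$, the containment $B\subseteq K$ must be proved separately. It is true and easy: for $\Lambda\in\mathfrak{F}_{sat}\subseteq\ZZ\mathfrak{F}$ write $\Lambda=\mu_1-\mu_2$ with $\mu_1,\mu_2\in\mathfrak{F}$; then for any nonzero $h\in S_{\mu_2}$ one has $fh\in S_{\mu_1}\subseteq A$, so $f=(fh)/h\in\mathrm{Frac}(A)$. (This is the same device the paper uses in the proof of Lemma~\ref{lemKXreg}.) With that repair your argument is complete.
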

\begin{proof}
    By the proof of~\cite[Proposition~6]{PV}, the integer closure of $S_{\mathfrak{F}}$ in its quotient field is contained in $S=\bigoplus\limits_{\Lambda\in \mathfrak{X}^{+}(B)} S_\Lambda$, since $S$ is factorial and hence normal. By~\cite[Proposition~4]{PV}, the integer closure of $S_{\mathfrak{F}}$ in $S$ is $\bigoplus\limits_{\Lambda\in\mathfrak{F}_{sat}} S_\Lambda$.
\end{proof}

\section{LNDs on affine horospherical varieties}

Let $X=X(\mathfrak{F})$ be an affine complexity-zero horospherical $G$-variety. Denote by $\sigma$ and $\sigma^{\vee}$ the corresponding pair of dual cones; see Section~\ref{subsecthorospher}. Let us define a new cone $\gamma=\gamma(X)$ corresponding to this variety. 

\begin{definition}
    An extremal ray $\rho\preceq \sigma$ is {\it significant} if $O_\rho$ has codimension one in $X$ and consists of regular points. 
Denote by $\gamma=\gamma(X)$ the cone in~$N_\QQ$ generated by~$\theta$ and 
the significant extremal rays of $\sigma$. We say that~$\gamma$ is the {\it regularity cone} of $X$. 
\end{definition}

\begin{lemma}
\label{lemKXreg}
The algebra of regular functions on the regular locus 
 $\Xreg$ of $X$ has the form $$\KK[\Xreg]=\bigoplus_{\Lambda\in \gamma^{\vee}\cap M}S_{\Lambda}.$$
\end{lemma}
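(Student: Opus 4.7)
The plan is to identify which weight modules $S_\Lambda$ contribute to $\KK[\Xreg]$ via a $G$-equivariant valuation analysis. Since $\Xreg$ is $G$-invariant, $\KK[\Xreg]$ is a locally finite $G$-submodule of $\mathrm{Frac}(\KK[X])$. Its $U$-invariants embed into $\mathrm{Frac}(\KK[X]^U) = \mathrm{Frac}(\KK[\mathfrak{F}])$, whose $T$-eigenspaces are one-dimensional and indexed by $M$. Consequently $\KK[\Xreg] = \bigoplus_{\Lambda \in E} S_\Lambda$ for a unique $E \subseteq M \cap \theta^\vee$, and the task reduces to showing $E = \gamma^\vee \cap M$.

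To pin down $E$, I would use that $\Xreg$ is smooth and hence normal: a rational function belongs to $\KK[\Xreg]$ if and only if it has nonnegative valuation at the generic point of every prime divisor meeting $\Xreg$, since codimension-two obstructions vanish by Hartogs' principle. For $f \in S_\Lambda$ the polar divisor is $G$-invariant, so it suffices to test along closures of codimension-one $G$-orbits contained in $\Xreg$. By Lemma~\ref{arl} these correspond to extremal rays of $\sigma$ not lying in $\theta$, and those that sit inside $\Xreg$ are, by definition, precisely the significant rays. Invoking the standard horospherical valuation formula $\nu_\rho(f) = \langle v_\rho, \Lambda \rangle$ for $f \in S_\Lambda$ and $v_\rho$ the primitive generator of $\rho$, the regularity condition becomes $\langle v_\rho, \Lambda \rangle \geq 0$ for every significant $\rho$. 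Together with $\Lambda \in M \cap \theta^\vee$ (dominance being equivalent to nonnegativity on $\theta$), this cuts out precisely $\gamma^\vee \cap M$, because $\gamma$ is generated by $\theta$ together with the significant rays.

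The most delicate point will be the valuation formula on the possibly non-normal variety $X$. I plan to transfer it from the normalization $\widetilde{X}$ of Lemma~\ref{lemnormaliz}, where the formula is classical, using that the normalization map is an isomorphism over $\Xreg$ (smooth forces normal), and in particular over each significant orbit $O_\rho$; the discrete valuation on the function field attached to $\overline{O_\rho}$ in $X$ then coincides with the one on $\widetilde{X}$. All remaining steps are routine cone bookkeeping, and the smoothness of $\Xreg$ automatically takes care of higher-codimension strata.
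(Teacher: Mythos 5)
Your proposal is correct and follows essentially the same route as the paper: decompose $\KK[\Xreg]$ into weight spaces $S_\Lambda$ sandwiched between $\KK[X]$ and $\KK[\mathcal{O}]=\bigoplus_{\Lambda\in\theta^\vee\cap M}S_\Lambda$, reduce membership to nonnegativity of the order along the closures of the significant codimension-one orbits, and evaluate that order as $\langle \Lambda,v_\rho\rangle$. You are somewhat more explicit than the paper about the Hartogs/normality step on the smooth locus and about transferring the valuation formula from the normalization, but these are refinements of the same argument rather than a different one.
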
 
\begin{proof}
Since the open orbit $\mathcal{O}$ consists of regular points, we have the following chain of inclusions: $$\KK[X]\subseteq \KK[\Xreg]\subseteq \KK[\mathcal{O}].$$ 
By~\cite[Theorem~7]{PV}, we obtain
$$
\KK[\mathcal{O}]=\bigoplus\limits_{\Lambda\in \mathfrak{X}^+(B)\cap M} S_{\Lambda}=
\bigoplus\limits_{\theta^\vee\cap M}S_{\Lambda}.
$$
Since $\Xreg$ is a $T$-invariant subset of $X$, the algebra $\KK[\Xreg]$ is an $M$-graded subalgebra of~$\KK[\mathcal{O}]$. So, we need to determine which homogeneous elements of $\KK[\mathcal{O}]$ belong to $\KK[\Xreg]$. Let $f\in S_\Lambda\subseteq \KK[\mathcal{O}]$. Then $f\in \KK[\Xreg]$ if and only if $f$ is regular on each orbit of codimension one consisting of regular points, that is, $f$ is regular on $O_\rho$ for each significant extremal ray $\rho$ of $\sigma$. 

Recall that the ideal $I_\rho\subseteq \KK[X]$ of functions vanishing on the closure $D_\rho=\overline{O_\rho}$ has the form
$$
I_\rho=\bigoplus_{\Lambda\in M\cap\sigma^\vee,\  \langle \Lambda, v_\rho\rangle>0}\!\!\!\!\!\!S_\Lambda,\text{ where }v_\rho\text{ is the primitive vector on }\rho.
$$
If $g\in S_\Lambda \backslash \{0\}$ for some $\Lambda \in M\cap \sigma^\vee$, then the order $\nu_{D_\rho}(g)$ of $g$ along $D_\rho$ is positive if and only if $\langle \Lambda, v_\rho\rangle>0$. This easily implies that $\nu_{D_\rho}(g)=\langle \Lambda, v_\rho\rangle$. Note that for all 
$\Lambda\in M\cap\theta^\vee$ every element $f\in S_\Lambda$  equals $\frac{g}{h}$ for some $g\in S_{\Lambda_1}$, $h\in S_{\Lambda_2}$ and 
$\Lambda_1,\Lambda_2\in \sigma^\vee\cap M$.
Therefore, we have $\nu_{D_\rho}(f)=\langle \Lambda, v_\rho\rangle$. 

Thus, for $f\in S_\Lambda$ the conditions $\nu_{D_\rho}(f)\geq 0$ for all significant extremal rays $\rho$ are equivalent to the condition $\Lambda\in\gamma^\vee$.  
\end{proof}

\begin{lemma}
\label{lemregorb}
Let $\rho$ be an extremal ray  of the cone $\sigma$ such that $\mathrm{codim}_XO_{\rho}=1$. Then the following conditions are equivalent.
    \begin{enumerate}
        \item There exists an $M$-homogeneous LND $\delta_e$ on $\KK[X]$ of degree $e$, where $e$ is a Demazure root of $\sigma$ with the distinguished extremal ray $\rho$.
        \item The orbit $O_{\rho}$ consists of regular points of $X$, i.e. $\rho$ is significant.
        \item The face $\widehat{\rho}\preceq \sigma^\vee$ is almost saturated.
    \end{enumerate}
\end{lemma}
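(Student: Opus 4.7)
The plan is to prove the three-term cycle $(2)\Leftrightarrow(3)\Rightarrow(1)\Rightarrow(2)$.

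First, to establish $(2)\Leftrightarrow(3)$ via normality in codimension one: since $O_\rho$ has codimension $1$ in $X$ and the regular locus $\Xreg$ is $G$-invariant, the inclusion $O_\rho\subseteq\Xreg$ is equivalent to $X$ being regular at the generic point of $D_\rho:=\overline{O_\rho}$, and hence (for a one-dimensional local ring of an integral domain) to $X$ being normal at~$D_\rho$. By Lemma~\ref{lemnormaliz}, the normalization of $\KK[X]=S_{\mathfrak{F}}$ is $S_{\mathfrak{F}_{sat}}$, so normality of $X$ at $D_\rho$ is equivalent to the existence of a homogeneous $f\in S_m$ with $m\in\mathfrak{F}$ and $\langle m,v_\rho\rangle=0$ (i.e.\ $m\in\widehat{\rho}$) such that $\KK[X]_f=\KK[\overline{X}]_f$. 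Unwinding this $M$-graded equality and replacing $m$ by a sufficiently large positive multiple if needed, the condition becomes $m+\mathfrak{F}_{sat}\subseteq\mathfrak{F}$, i.e.\ $m$ is a saturation point lying in $\widehat{\rho}$.

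Next, for $(3)\Rightarrow(1)$: given a saturation point $m\in\widehat{\rho}\cap\mathfrak{F}$, pick any Demazure root $e_0\in\mathfrak{R}_\rho(\sigma)$ (it exists because $\rho$ is an extremal ray and $v_\rho$ is primitive) and set $e:=e_0+km$ with $k$ large enough so that $\langle e,v_{\rho'}\rangle\geq \langle m,v_{\rho'}\rangle$ for every extremal ray $\rho'\neq\rho$; then $e$ is itself a Demazure root with distinguished ray $\rho$. On the normal horospherical variety $\overline{X}=X(\mathfrak{F}_{sat})$ there is a homogeneous LND $\overline{\delta_e}$ of degree $e$ that annihilates every weight component $S_\Lambda$ with $\langle\Lambda,v_\rho\rangle=0$ and maps $S_\Lambda$ into $S_{\Lambda+e}$ otherwise; this is the standard Demazure-type derivation on a normal horospherical variety constructed from the representation theory of $G$, cf.~\cite{AKZ-1,GSh,BGS}. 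The choice of $e$ guarantees that for every $\Lambda\in\mathfrak{F}$ with $\langle\Lambda,v_\rho\rangle\geq 1$ one has $\Lambda+e-m\in\sigma^\vee$, hence $\Lambda+e\in(m+\sigma^\vee)\cap M\subseteq\mathfrak{F}$ by the saturation property. Consequently $\overline{\delta_e}$ preserves $\KK[X]\subseteq\KK[\overline{X}]$ and restricts to the required LND $\delta_e$.

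Finally, for $(1)\Rightarrow(2)$: integrating $\delta_e$ gives a nontrivial algebraic $\Ga$-action on $X$ under which $\Xreg$ is invariant. Since $\langle e,v_\rho\rangle=-1$, taking any $f\in S_\Lambda\subseteq I_\rho$ with $\langle\Lambda,v_\rho\rangle=1$ yields $\delta_e(f)\in S_{\Lambda+e}$ with $\langle\Lambda+e,v_\rho\rangle=0$, so $\delta_e(I_\rho)\not\subseteq I_\rho$ and the generic $\Ga$-orbit through a point of $O_\rho$ is one-dimensional and not contained in $D_\rho$. Such a $\Ga$-orbit is a closed affine line in $X$ and cannot lie entirely in any single proper $G$-orbit closure (the only one containing $O_\rho$ is $D_\rho$ itself), so it must meet the open $G$-orbit $\mathcal{O}\subseteq\Xreg$; $\Ga$-invariance of $\Xreg$ then places a generic point of $O_\rho$ in $\Xreg$, and $G$-homogeneity of $O_\rho$ together with $G$-invariance of $\Xreg$ forces $O_\rho\subseteq\Xreg$.

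The hard part will be $(3)\Rightarrow(1)$: in the toric case $\overline{\delta_e}$ is visibly the explicit Demazure derivation $\chi^\Lambda\mapsto\langle\Lambda,v_\rho\rangle\chi^{\Lambda+e}$, whereas for a general reductive $G=G'\times L$ one has to invoke the representation-theoretic existence of a homogeneous LND on $\KK[\overline{X}]$ of degree $e$ that does annihilate every weight space with $\langle\Lambda,v_\rho\rangle=0$; once this input is accepted, the new ingredient is the combinatorial shift $e=e_0+km$, which is exactly what allows the normal-case LND to descend to the non-normal algebra $\KK[X]$.
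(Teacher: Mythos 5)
Your cycle $(2)\Leftrightarrow(3)\Rightarrow(1)\Rightarrow(2)$ is structurally sound and two of its legs are essentially fine. The implication $(1)\Rightarrow(2)$ is the paper's own argument (push a point of $O_\rho$ off $D_\rho$ along the $\Ga$-orbit and use that the only $G$-orbit closures meeting $O_\rho$ are $D_\rho$ and $X$); your choice of $f\in S_\Lambda$ with $\langle\Lambda,v_\rho\rangle=1$ is slightly too optimistic, since such a $\Lambda$ need not lie in $\mathfrak{F}$ and $\delta_e$ could vanish on that weight space --- take instead any homogeneous $f$ with $\delta_e(f)\neq 0=\delta_e^2(f)$, as the paper does. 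Your direct proof of $(2)\Leftrightarrow(3)$, via regularity $=$ normality at the generic point of the codimension-one subvariety $D_\rho$ and localization at a homogeneous $f\in S_m$ with $m\in\widehat{\rho}\cap\mathfrak{F}$, is a genuinely different and rather clean route: the paper never compares $(2)$ and $(3)$ directly but channels everything through $(1)$. This leg is correct once you note that the non-normal locus is $G$-stable, so its ideal is a sum of weight spaces and a suitable $f$ of weight in $\widehat{\rho}$ exists.

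The genuine gap is in $(3)\Rightarrow(1)$, exactly at the step you yourself flag as the input to be ``accepted'': the existence on $\KK[\overline{X}]$ of a homogeneous LND of the \emph{prescribed} degree $e=e_0+km$. For non-toric $G$ no such general existence statement is available; the paper explicitly remarks that determining the degrees of homogeneous LNDs is an open problem outside the toric case, and \cite[Lemma~4.3]{BGS} gives only a necessary condition on the degree. The explicit derivation you describe (annihilate $S_\Lambda$ when $\langle\Lambda,v_\rho\rangle=0$, map $S_\Lambda$ to $S_{\Lambda+e}$ otherwise) is multiplication by some $h\in S_e$ scaled by $\langle\Lambda,v_\rho\rangle$, which requires $S_e\neq\{0\}$, i.e.\ $e\in\theta^\vee$; your $e=e_0+km$, with $e_0$ an arbitrary Demazure root and $m\in\widehat{\rho}$, need not land in $\theta^\vee$. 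The paper avoids prescribing $e$: it first proves $(2)\Rightarrow(1)$ geometrically --- by \cite[Proposition~3]{GSh} some $\Ga$-action fails to preserve $\overline{O_\rho}$, and a vertex of the Newton polytope of the corresponding LND yields, via Lemma~\ref{pslemmar} and \cite[Lemma~4.3]{BGS}, a homogeneous LND whose degree is \emph{some} Demazure root with distinguished ray $\rho$ --- and then, for $(3)\Rightarrow(1)$, applies this to the normalization and descends to $\KK[X]$ by the replica $h\delta_{e'}$ with $h$ in the weight space of a saturation point. Your argument can be repaired in your own spirit by first forcing $e\in\theta^\vee$ (possible since $\mathrm{codim}_XO_\rho=1$ implies $\widehat{\rho}\not\subseteq\partial\theta^\vee$, so a saturation point can be chosen interior to $\theta^\vee$ and $k$ taken large) and then using the multiplication derivation of Lemma~\ref{lemnext}; but as written the existence of $\overline{\delta_e}$ is an unproved assumption, and it is the heart of the lemma.
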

\begin{proof}
(1)$\implies$(2) Let us prove that there exists an automorphism ${\varphi\in\SAut(X)}$ such that~$\varphi$ takes a point in $O_\rho$ to a point in the open orbit $\mathcal{O}$. Since $\mathcal{O}$ consists of regular points, this implies that $O_\rho$ consists of regular points.

We claim that $\mathrm{Ker}\,\delta_e=\bigoplus\limits_{\Lambda\in M\cap \widehat{\rho}}S_\Lambda$. Indeed, since $\langle e,v_\rho\rangle=-1$, for every $\Lambda\in \widehat{\rho}$ we have $\Lambda+e\notin\sigma^\vee$. Therefore, $\bigoplus\limits_{\Lambda\in M\cap \widehat{\rho}}S_\Lambda\subseteq \mathrm{Ker}\,\delta_e$. But $\bigoplus\limits_{\Lambda\in M\cap \widehat{\rho}}S_\Lambda\cong \KK[O_\rho]$ is an algebraically closed subalgebra of $\KK[X]$ with transcendence degree $\dim X-1$. Hence, $${\bigoplus\limits_{\Lambda\in M\cap \widehat{\rho}}S_\Lambda= \mathrm{Ker}\,\delta_e}.$$


Denote by $\mathcal{H}_e$ the $\Ga$-subgroup in $\Aut(X)$ corresponding to $\delta_e$. There exists a function $f\in I_\rho$ such that $\delta_e(f)\notin I_\rho$. Indeed, we can take any $M$-homogeneous function $f$ such that $\delta_e(f)\neq 0$ and $\delta_e^2(f)=0$.

Take $x\in O_{\rho}$ such that $x$ does not belong to any orbit closure~$\overline{O_\tau}$, where $\tau\neq \rho$ is an extremal ray of $\sigma$. Moreover, we can choose $x$ in such a way that $\delta_e(f)(x)\neq0$ for some $f\in I_\rho$. For $s\in \mathcal{H}_e \backslash \{0\}$ we have
$$
f((-s)\cdot x)=s\cdot f(x)=\mathrm{exp}(s\delta_e)(f)(x)=(f+s\delta_e(f))(x)=s\delta_e(f)(x)  \neq 0.$$
Hence, we obtain $(-s)\cdot x\notin \overline{O_\rho}$.

The orbit $\mathcal{H}_e x$ is irreducible and it is not contained in $\overline{O_\zeta}$ for all extremal rays $\zeta$ of $\sigma$. Therefore, there exists a point $y\in \mathcal{H}_e x\cap \mathcal{O}$.

(2)$\implies$(1) Suppose 
the orbit $O_\rho$ consists of regular points of $X$. Consider the following $\mathbb{Z}$-grading of $\KK[X]$:
$$\KK[X]=\bigoplus_{i\in\mathbb{Z}}\KK[X]_i=\bigoplus_{\langle \Lambda,v_\rho \rangle =i} S_\Lambda.$$ 
All negative homogeneous components of this grading are zero.
The set of fixed points for the corresponding $\KK^\times$-action on $X$ is $Z=\mathbb{V}(I_\rho)=\overline{O}_\rho$. 
 
Now we are going to apply~\cite[Proposition~3]{GSh}. There the variety $X$ is assumed to be normal, but it is not used in the proof. So the conclusion holds for non-normal varieties as well. By~\cite[Proposition~3]{GSh}, the 
tangent space $\mathrm{T}_zX$ at every point $z\in Z^{\mathrm{reg}}\cap X^{\mathrm{reg}}\supseteq O_\rho$ is spanned by the tangent space $\mathrm{T}_zZ$ and the tangent vectors to the orbits of all regular $\Ga$-actions on $X$. Hence, there is a $\Ga$-action such that $\overline{O_\rho}$ is not invariant. Let $\delta$ be the corresponding LND of $\KK[X]$. 
Consider the decomposition $\delta=\sum_{\alpha\in M}\delta_\alpha$ of $\delta$ into a sum of $M$-homogeneous derivations. Since the ideal $I_\rho$ is not $\delta$-invariant, there exists $\alpha$ such that $\delta_\alpha\neq 0$ and $\langle \alpha,v_\rho \rangle <0$. Therefore, there exists a vertex $\beta$ of the convex hull of the degrees $\alpha$ satisfying $\delta_\alpha\neq 0$ such that $\langle \beta,v_\rho \rangle <0$. The derivation $\delta_\beta$ is locally nilpotent by Lemma~\ref{pslemmar}.
By~\cite[Lemma~4.3]{BGS}, the condition $\beta\notin\sigma$ implies that $\beta$ is a Demazure root of~$\sigma$. Since $\langle \beta,v_\rho\rangle<0$, the distinguished ray of $\delta_\beta$ is $\rho$.

(1)$\implies$(3)
  The maximal face, that is, the whole cone $\sigma^\vee$, is almost saturated; see for example~\cite[Lemma~2]{BG}. Suppose $p$ is a saturation point of $\sigma^\vee$. We denote $k=\langle p, v_\rho \rangle$. Let us prove that the point $q=p+ke\in\widehat{\rho}$ is a saturation point of $\widehat{\rho}$. Recall that $$\mathrm{Ker}\,\delta_e=\bigoplus\limits_{\Lambda\in M\cap\widehat{\rho}}S_\Lambda.$$ 
  Since $p$ is a saturation point of $\sigma^{\vee}$, for each $\alpha\in M\cap\sigma^\vee$ we have $p+\alpha\in\mathfrak{F}$. Let us take $f\in S_{p+\alpha}$. Then $\delta_e^k(f)\in S_{p+\alpha+ke}=S_{q+\alpha}$ is nonzero. Therefore, $q+\alpha\in\mathfrak{F}$. This implies that $q$ is a saturation point of $\widehat{\rho}$.

(3)$\implies$(1) Let us consider the normalization $\widetilde{X}$ of $X$. Then $\widetilde{X}$ is also a complexity-zero horospherical $G$-variety, and $\KK[\widetilde{X}]=\bigoplus\limits_{\Lambda\in\mathfrak{F}_{sat}} S_\Lambda$, where  $\mathfrak{F}_{sat}=M\cap\sigma^\vee$; see Lemma~\ref{lemnormaliz}. By definition, the cone generated by $\mathfrak{F}$ equals $\sigma^\vee$. Let us consider the orbit $O_\rho'$ on the variety~$\widetilde{X}$ corresponding to the ray $\rho\preceq\sigma$. By Lemma~\ref{lemorbit}, $\dim O_\rho'=\dim O_\rho=\dim X-1$. Since $\widetilde{X}$ is normal, the orbit $O_\rho'$ consists of regular points. But we have already proved the implication (2)$\implies$(1). So, there exists a Demazure root $e'$ of $\sigma^\vee$ with the distinguished ray $\rho$ and an $M$-homogeneous LND $\delta_{e'}$ of $\KK[\widetilde{X}]$ with degree $e'$. Since the face $\widehat{\rho}$ is almost saturated with respect to $\mathfrak{F}$, we have an $\mathfrak{F}$-saturation point $p\in\widehat{\rho}$. Let us fix a nonzero function $h\in S_p$ and consider a replica $h\delta_{e'}$ of $\delta_{e'}$. This replica $h\delta_{e'}$ is  an LND of $\KK[\widetilde{X}]$ such that $\KK[X]\subseteq \KK[\widetilde{X}]$ is an invariant subalgebra. Indeed, if $\Lambda\in\mathfrak{F}\setminus \widehat{\rho}$, then $\Lambda+p+e'=p+(\Lambda+e')\in \mathfrak{F}$. Hence, $\delta_e=h\delta_{e'}|_{\KK[X]}$ is an $M$-homogeneous LND of $\KK[X]$ with the degree $e=e'+p$, which is a Demazure root of $\sigma$ with the distinguished ray $\rho$.
\end{proof}

\begin{lemma}
\label{lemderu}
Under the conditions of Lemma~\ref{lemregorb}, the Demazure root $e$ can be chosen in~$\theta^\vee$. 
\end{lemma}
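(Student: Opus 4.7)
The plan is to take the LND $\delta_e$ of Demazure-root degree $e\in\mathfrak{R}_\rho(\sigma)$ provided by Lemma~\ref{lemregorb} and shift its degree into $\theta^\vee$ by multiplying it by a nonzero element $h\in S_p$ for a suitable $p\in\mathfrak{F}\cap\widehat{\rho}$. By the proof of Lemma~\ref{lemregorb} we have $\Ker\delta_e=\bigoplus_{\Lambda\in\widehat{\rho}\cap M}S_\Lambda$, so any such $h$ lies in $\Ker\delta_e$, and hence $h\delta_e$ is again an $M$-homogeneous LND, now of degree $e+p$. Since $p\in\widehat{\rho}\subseteq\sigma^\vee$ satisfies $\langle v_\rho,p\rangle=0$ and $\langle v_{\rho'},p\rangle\geq 0$ for the remaining extremal rays $\rho'$ of $\sigma$, the degree $e+p$ remains a Demazure root with distinguished ray $\rho$. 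Moreover $p\in\mathfrak{F}\subseteq\theta^\vee$, so to ensure $e+p\in\theta^\vee$ it suffices to arrange $\langle\alpha_i^\vee,e+p\rangle\geq 0$ for every simple coroot $\alpha_i^\vee$ of $G'$.

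Call $i$ \emph{bad} if $\langle\alpha_i^\vee,e\rangle<0$. Because $\sigma^\vee=\QQ_{\geq 0}\mathfrak{F}$, the face $\widehat{\rho}$ of $\sigma^\vee$ is generated as a rational cone by $\mathfrak{F}\cap\widehat{\rho}$, so the task reduces to showing $\widehat{\rho}\not\subseteq(\alpha_i^\vee)^\perp$ for every bad $i$; then one chooses $q_i\in\mathfrak{F}\cap\widehat{\rho}$ with $\langle\alpha_i^\vee,q_i\rangle>0$ for each such $i$ and sets $p=N\sum_{i\text{ bad}}q_i$ for $N$ sufficiently large. Writing $\overline{\alpha_i^\vee}$ for the restriction of $\alpha_i^\vee$ to $M_\QQ$, and noting that $\widehat{\rho}$ has dimension $\dim M_\QQ-1$ by face duality and therefore linearly spans $v_\rho^\perp\subseteq M_\QQ$, the inclusion $\widehat{\rho}\subseteq(\alpha_i^\vee)^\perp$ is equivalent to $\overline{\alpha_i^\vee}\in\QQ v_\rho$.

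The main step, and the main obstacle, is the combinatorial claim that for a significant extremal ray $\rho$ and any simple coroot $\alpha_i^\vee$ of $G'$ one has $\overline{\alpha_i^\vee}\notin\QQ_{>0}v_\rho$. I would prove this as follows. Since $\theta^\vee=\mathcal{C}\cap M_\QQ$ is cut out in $M_\QQ$ by the inequalities $\langle\overline{\alpha_j^\vee},\cdot\rangle\geq 0$, the dual cone $\theta$ is generated by $\{\overline{\alpha_j^\vee}\}$, so $\overline{\alpha_i^\vee}\in\theta$. If $\overline{\alpha_i^\vee}\in\QQ_{>0}v_\rho$ then $v_\rho\in\theta$, and $v_\rho$ lies in the relative interior of a unique face $F$ of $\theta$. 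If $F$ is an extremal ray of $\theta$, then $\rho$ itself is an extremal ray of $\theta$, contradicting significance via Lemma~\ref{arl}. Otherwise $\dim F\geq 2$ and $v_\rho$ is a strictly positive combination of the primitive generators $w_1,\ldots,w_k$ of the (at least two) extremal rays of $F$. None of the $w_j$ can be parallel to $v_\rho$, for otherwise the corresponding extremal ray of $\theta$ would be $\rho$; hence the equation $v_\rho=\sum_j c_jw_j$ with $c_j>0$ expresses $v_\rho$ as a positive combination of at least two non-parallel vectors of $\theta\subseteq\sigma$ lying off $\rho$, contradicting the extremality of $\rho$ in $\sigma$.

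Assembling the pieces, $h\delta_e$ is an $M$-homogeneous LND on $\KK[X]$ of degree $e+p\in\theta^\vee$, which is a Demazure root of $\sigma$ with distinguished ray $\rho$, as required.
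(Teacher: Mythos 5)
Your proof is correct, and it uses the same basic mechanism as the paper's proof --- replace the LND $\delta_{e'}$ supplied by Lemma~\ref{lemregorb} by the replica $h\delta_{e'}$ with $h\in S_p$ for a suitable $p\in\mathfrak{F}\cap\widehat{\rho}$, thereby shifting the degree to $e'+p$ while keeping it a Demazure root with distinguished ray $\rho$ --- but you verify that the shifted degree lands in $\theta^\vee$ by a genuinely different argument. The paper first produces an auxiliary element $m\in\theta^\vee$ with $\langle m,v_\rho\rangle=-1$ (it exists because otherwise $\widehat{\rho}$ would lie in a proper face of $\theta^\vee$, contradicting $\mathrm{codim}_X O_\rho=1$ via Lemma~\ref{arl}), then picks $a\in\mathfrak{F}\cap\widehat{\rho}$ deep enough that $a+e'-m\in\sigma^\vee$, and concludes at once from $e=a+e'=(a+e'-m)+m\in\sigma^\vee+\theta^\vee\subseteq\theta^\vee$. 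You instead check the wall inequalities of the Weyl chamber one coroot at a time, and the geometric input you need --- that $\overline{\alpha_i^\vee}\notin\QQ_{>0}v_\rho$ when $\rho$ is not an extremal ray of $\theta$ --- is again Lemma~\ref{arl}, combined with the extremality of $\rho$ in $\sigma$; your face-of-$\theta$ case analysis for this is sound. The paper's route is shorter; yours makes explicit exactly which chamber inequalities can fail and why each can be repaired inside $\widehat{\rho}$. One micro-gap: your reduction requires $\overline{\alpha_i^\vee}\notin\QQ v_\rho$ for every bad $i$, whereas your main claim only excludes positive multiples. The remaining cases $\overline{\alpha_i^\vee}=cv_\rho$ with $c\le 0$ are ruled out because they give $\langle\alpha_i^\vee,e\rangle=-c\ge 0$, contradicting badness (and $c<0$ already contradicts strict convexity of $\sigma$, since both $v_\rho$ and $\overline{\alpha_i^\vee}\in\theta\subseteq\sigma$); this deserves one explicit sentence.
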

\begin{proof}
   If the intersection of $\theta^\vee$ and $\{m\in M\mid \langle m, v_\rho\rangle=-1 \}$ is empty, then $\widehat{\rho}$ is included in the boundary of $\theta^\vee$. This contradicts $\mathrm{codim}_XO_\rho=1$. Hence, we can take $m\in \theta^\vee$ such that $\langle m, v_\rho\rangle=-1$. Also, by Lemma~\ref{lemregorb}, there exists a Demazure root $e'\in M$ such that $\langle e', v_\rho\rangle=-1$, $\langle e', v_\zeta\rangle\geq 0$ for all extremal rays $\zeta\neq \rho$, and there exists a well-defined LND~$\delta_{e'}$ of degree $e'$ on $\KK[X]$. We can choose such $a\in \mathfrak{F}$ that $\langle a, v_\rho\rangle=0$, and $a+e'-m\in \sigma^\vee$. Let us denote $e=a+e'$. Then $\langle e, v_\rho\rangle=-1$, $\langle e, v_\zeta\rangle\geq 0$ for all extremal rays $\zeta\neq \rho$. Since $a=e-e'\in\mathfrak{F}$, we have $S_a\in\KK[X]$. Let us take a nonzero $f\subseteq S_a$. Since $\langle a,v_\rho\rangle=0$, we obtain $S_a\in\Ker \delta_{e'}$, in particular, $f\in\Ker \delta_{e'}$. Therefore,  there exists an LND $\delta_e=f\delta_{e'}$ of~$\KK[X]$. Also we have $e=(a+e'-m)+m\in \theta^\vee$.
\end{proof}

Let $e$ be a Demazure root of $\sigma$ such that there exists a well-defined LND $\delta_e$ of degree $e$. If $e$ is contained in $\theta^\vee$, we can provide an explicit construction of an LND with degree~$e$; compare~\cite[Lemma~5.8]{BGS}; see also~\cite[Section~6.1]{AA}. 
Let us fix $h\in S_e$. Then we define a linear mapping $D_h\colon \KK[X]\rightarrow \KK[X]$. Namely, for each homogeneous element $f\in S_\Lambda$ we put
$$D_h(f)=\langle \Lambda,v_\rho \rangle fh,$$
where $v_\rho$ is the primitive integer vector on the distinguished extremal ray $\rho$.
 
\begin{lemma}
\label{lemnext}
The mapping $D_h$ is a well-defined LND of degree $e$. 
\end{lemma}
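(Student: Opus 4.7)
The plan is to split the verification into two stages: first, I would check that the formula defines a locally nilpotent derivation of degree $e$ on the larger algebra $\KK[\mathcal{O}]=\bigoplus_{\Lambda\in\theta^\vee\cap M}S_\Lambda$ of the open $G$-orbit; then I would use the existence of the assumed LND $\delta_e$ on $\KK[X]$ to show that this derivation restricts to $\KK[X]$. The first stage is essentially computational; the only real work is in the second.

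For the first stage, since $e\in\theta^\vee$ and $\theta^\vee$ is a convex cone, $\Lambda+e\in\theta^\vee$ for every $\Lambda\in\theta^\vee\cap M$, so $D_h$ maps each $S_\Lambda$ into $S_{\Lambda+e}\subseteq\KK[\mathcal{O}]$; in particular $D_h$ is an endomorphism of degree $e$. Since the pairing $\langle\,\cdot\,,v_\rho\rangle$ is linear, $D_h$ satisfies the Leibniz rule on homogeneous elements: for $f_i\in S_{\Lambda_i}$ one has
\[
\langle\Lambda_1+\Lambda_2,v_\rho\rangle f_1f_2h=\langle\Lambda_1,v_\rho\rangle f_1h\cdot f_2+f_1\cdot\langle\Lambda_2,v_\rho\rangle f_2h,
\]
which extends by bilinearity to all of $\KK[\mathcal{O}]$. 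Local nilpotency is also direct: iterating the definition on $f\in S_\Lambda$ gives
\[
D_h^k(f)=\Bigl(\prod_{i=0}^{k-1}\langle\Lambda+ie,v_\rho\rangle\Bigr)fh^k,
\]
and since $\langle e,v_\rho\rangle=-1$ the consecutive integers $\langle\Lambda,v_\rho\rangle-i$ eventually hit $0$, so $D_h^k(f)=0$ once $k>\langle\Lambda,v_\rho\rangle$.

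The main obstacle is showing that $D_h$ preserves $\KK[X]=\bigoplus_{\Lambda\in\mathfrak{F}}S_\Lambda$. For $f\in S_\Lambda$ with $\Lambda\in\mathfrak{F}$ the image $D_h(f)$ lies in $S_{\Lambda+e}$, and I need to know that $\Lambda+e\in\mathfrak{F}$ whenever $\langle\Lambda,v_\rho\rangle\neq 0$; this is precisely where the hypothesis on existence of $\delta_e$ enters. Repeating the kernel computation from the proof of (1)$\Rightarrow$(2) of Lemma~\ref{lemregorb}, I would observe that $\Ker\delta_e=\bigoplus_{\Lambda'\in\widehat{\rho}\cap\mathfrak{F}}S_{\Lambda'}$. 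Consequently, for $\Lambda\in\mathfrak{F}$ with $\langle\Lambda,v_\rho\rangle\geq 1$ the subspace $S_\Lambda$ is not contained in $\Ker\delta_e$, so there exists $g\in S_\Lambda$ with $0\neq\delta_e(g)\in S_{\Lambda+e}\cap\KK[X]$. Since $\KK[X]$ is $M$-graded with homogeneous components indexed by $\mathfrak{F}$, the existence of such a nonzero element forces $\Lambda+e\in\mathfrak{F}$, and hence $S_{\Lambda+e}\subseteq\KK[X]$. For $\Lambda\in\widehat{\rho}\cap\mathfrak{F}$ the coefficient $\langle\Lambda,v_\rho\rangle$ vanishes, so $D_h$ vanishes there and nothing needs to be checked. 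Combining these observations with the first stage yields the lemma.
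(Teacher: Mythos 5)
Your core argument is sound and in fact matches the paper's computation on the essential points: the Leibniz rule via linearity of $\langle\cdot,v_\rho\rangle$, local nilpotency via the vanishing factor in $\prod_{i}(\langle\Lambda,v_\rho\rangle-i)$, and well-definedness via the implication $\langle\Lambda,v_\rho\rangle\neq0\Rightarrow\Lambda+e\in\mathfrak{F}$. Your second stage is actually \emph{more} careful than the paper's: the paper asserts this implication as an immediate consequence of the existence of $\delta_e$, whereas you justify it by recomputing $\Ker\delta_e=\bigoplus_{\Lambda\in\widehat{\rho}\cap\mathfrak{F}}S_\Lambda$ and observing that a nonzero $\delta_e(g)\in S_{\Lambda+e}\cap\KK[X]$ forces $\Lambda+e\in\mathfrak{F}$. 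That is a genuine improvement in rigor.

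However, your first stage contains a false claim: $D_h$ is \emph{not} locally nilpotent on $\KK[\mathcal{O}]=\bigoplus_{\Lambda\in\theta^\vee\cap M}S_\Lambda$. Since $\mathrm{codim}_XO_\rho=1$, Lemma~\ref{arl} says $\rho$ is not an extremal ray of $\theta$, hence $v_\rho\notin\theta$ and there exist weights $\Lambda\in\theta^\vee\cap M$ with $\langle\Lambda,v_\rho\rangle<0$. For such $\Lambda$ the integers $\langle\Lambda,v_\rho\rangle-i$, $i\geq0$, never hit zero, so by your own product formula $D_h^k(f)=\bigl(\prod_{i=0}^{k-1}(\langle\Lambda,v_\rho\rangle-i)\bigr)fh^k\neq0$ for all $k$ (the algebra is a domain). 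Thus the logical structure ``LND on $\KK[\mathcal{O}]$, then restrict'' does not work as stated. The repair is immediate and brings you back to the paper's route: all the checks should be performed directly on $\KK[X]$, where every weight $\Lambda$ lies in $\mathfrak{F}\subseteq\sigma^\vee$ and hence $\langle\Lambda,v_\rho\rangle\geq0$, so $D_h^{\langle\Lambda,v_\rho\rangle+1}(f)=0$. With that one-line correction the proof is complete.
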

\begin{proof}
    Since there exists a well-defined LND of degree $e$, for each $\Lambda\in \mathfrak{F}$ with $\langle \Lambda,v_\rho\rangle\neq 0$ we have $\Lambda+e\in \mathfrak{F}$. So, for $f\in S_\Lambda$, we obtain $fh\in\KK[X]$. If $\langle \Lambda,v_\rho\rangle= 0$, then $D_h(f)=0$. 
    Therefore, $D_h$ is well-defined. Let us check the Leibniz rule for $D_h$. It is sufficient to check it only for homogeneous elements. Let $f\in S_\Lambda$, $g\in S_\Omega$. Then
    $$
    D_h(fg)=\langle \Lambda+\Omega, v_\rho\rangle fgh=\langle \Lambda, v_\rho\rangle (fh)g+\langle \Omega, v_\rho\rangle (gh)f=D_h(f)g+D_h(g)f.
    $$
    Since $D_h^{\langle\Lambda,v_\rho\rangle+1}(f)=0$, the derivation $D_h$ is locally nilpotent. 
\end{proof}

\section{Main results}

The following flexibility criterion for an affine complexity-zero horospherical variety is the main result of this work.

\begin{theorem}
\label{thmcrit}
Let $X$ be an affine complexity-zero horospherical variety. Then the following conditions are equivalent. 
\begin{enumerate}
    \item The variety $X$ is flexible.
    \item The regularity cone $\gamma$ of the variety $X$ is not contained in any hyperplane in  $N_\mathbb{Q}$.
    \item For each non-constant regular function $f\in\KK[X]$ the intersection of the zero locus~$\mathbb{V}(f)$ and the regular locus $X^\mathrm{reg}$ is not empty.
\end{enumerate}
\end{theorem}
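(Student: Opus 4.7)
The plan is to prove the three conditions equivalent via the cycle $(1)\Rightarrow(3)\Rightarrow(2)\Rightarrow(1)$, with the last implication carrying essentially all the technical content.

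For $(1)\Rightarrow(3)$ I would invoke~\cite[Theorem~0.1]{AFKKZ-1}: flexibility makes $\SAut(X)$ act transitively on $\Xreg$, which forces the Makar--Limanov invariant of $\KK[X]$ to be trivial (a nonconstant element of $\bigcap_\delta\Ker\delta$ would be $\SAut(X)$-invariant, hence constant on $\Xreg$, contradicting transitivity). Thus for a nonconstant $f\in\KK[X]$ there is an LND $\delta$ with $\delta(f)\neq 0$. Picking $x\in\Xreg$ with $\delta(f)(x)\neq 0$, the map $s\mapsto f\bigl(\exp(s\delta)\cdot x\bigr)$ is a nonconstant polynomial in $s$, and any of its roots $s_0\in\KK$ yields a regular point $\exp(s_0\delta)\cdot x\in\Xreg$ at which $f$ vanishes.

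For $(3)\Rightarrow(2)$ I would argue by contraposition. Suppose $\gamma\subseteq\Lambda_0^\perp$ for some nonzero $\Lambda_0\in M$; since $\theta\subseteq\gamma$, the vector $\Lambda_0$ lies in the lineality of $\theta^\vee$, namely $\mathfrak{X}(L)_\QQ\cap M_\QQ$, so $\Lambda_0$ is a pure $L$-character. My plan is then to produce a nonzero $\Lambda\in\mathfrak{F}\cap\gamma^\perp$ by taking a saturation point $\mu$ of $\mathfrak{F}$ inside the face $\sigma^\vee\cap\gamma^\perp$ of $\sigma^\vee$ and setting $\Lambda=\mu+k\Lambda_0\in\mathfrak{F}$ for suitable $k$; existence of such a saturation point reduces via the face correspondence to the almost-saturation of the faces $\widehat\rho$ for significant $\rho$, guaranteed by Lemma~\ref{lemregorb}(3). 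For a nonzero $f\in S_\Lambda\subseteq\KK[X]$ the vanishing-order formula $\nu_{D_\rho}(f)=\langle\Lambda,v_\rho\rangle=0$ from the proof of Lemma~\ref{lemKXreg} shows that $f$ has no zero along any codimension-one orbit of regular points; and since $\Lambda\in\mathfrak{X}(L)_\QQ$, the function $f$ is nowhere zero on the open orbit $\mathcal{O}$ either. Hence $f$ is a nonconstant regular function with $\mathbb{V}(f)\cap\Xreg=\emptyset$, contradicting~(3).

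The main implication $(2)\Rightarrow(1)$ proceeds in two stages. First, by~\cite{GSh}, flexibility is equivalent to generic flexibility on affine complexity-zero horospherical varieties, so it suffices to exhibit a point $x\in\mathcal{O}$ at which the velocity vectors of $\Ga$-actions on $X$ span the tangent space $\mathrm{T}_xX\cong\mathfrak{g}/\mathfrak{h}$. Second, I would assemble such velocities from three families: (a) negative root $\Ga$-subgroups $U_{-\alpha}\subseteq G'$, giving weight-$(-\alpha)$ tangent vectors covering $\mathfrak{u}^-$ modulo $\mathfrak{h}$; (b) one-parameter $\Ga$-subgroups of the torus factor $L$, covering $\mathfrak{l}$; and (c) Demazure-root LNDs $\delta_e$ for each significant extremal ray $\rho\preceq\sigma$, furnished by Lemmas~\ref{lemregorb}--\ref{lemnext} with weights $e$ placed inside $\theta^\vee$. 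The key claim is that condition~(2)---the full dimensionality of $\gamma$---translates into the $T$-weights of the velocity vectors from (a)--(c) spanning $M_\QQ$, which in turn forces the corresponding velocity vectors to span the weight-zero piece $\mathfrak{t}/\mathrm{Lie}(H\cap T)$ of $\mathrm{T}_xX$.

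The principal obstacle is precisely this final combinatorial-to-geometric conversion: showing that full dimensionality of $\gamma$ produces enough Demazure-root LNDs whose velocities, together with the $G$-action-derived vectors from (a) and (b), span $\mathrm{T}_xX$ at a generic $x\in\mathcal{O}$. Carrying this out requires Lemma~\ref{pslemmar} to extract new locally nilpotent homogeneous components as vertex components of convex hulls of degrees, Lemma~\ref{lemnext} to construct replicas with prescribed $\mathfrak{F}$-character adjustments, and crucially Lemma~\ref{lemderu} to place Demazure-root weights inside $\theta^\vee$, so that multiplication by elements of $\mathfrak{F}$ can realize every weight direction encoded by $\gamma$. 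This is the step at which the toric argument of~\cite{BG}, the semisimple-group argument of~\cite{Sh-1}, and the normal-horospherical argument of~\cite{GSh} must be merged and extended to the general non-normal case.
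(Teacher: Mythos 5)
Your cycle $(1)\Rightarrow(3)\Rightarrow(2)\Rightarrow(1)$ is a reasonable skeleton and your $(1)\Rightarrow(3)$ argument (trivial Makar--Limanov invariant plus a root of the nonconstant polynomial $s\mapsto f(\exp(s\delta)\cdot x)$) is fine, but the other two implications have genuine gaps. In $(3)\Rightarrow(2)$ your plan is to produce a nonzero $\Lambda\in\mathfrak{F}\cap\gamma^\perp$ from a saturation point of the face $\sigma^\vee\cap\gamma^\perp$. This fails on two counts. First, since $\theta$ (and hence $\gamma$) can lie in the \emph{interior} of $\sigma$ (see the paper's second example, where $\theta=\mathrm{cone}((1,0))$ sits inside $\sigma=\mathrm{cone}((2,-1),(0,1))$), the face $\sigma^\vee\cap\gamma^\perp$ can be the zero face even when $\gamma$ is contained in a hyperplane, so there is no nonzero $\Lambda$ to be found there at all. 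Second, even when that face is positive-dimensional, its almost-saturation does not ``reduce via the face correspondence'' to the almost-saturation of the facets $\widehat\rho$ for significant $\rho$; an intersection of almost saturated faces need not contain a common saturation point, and you give no argument. (A smaller imprecision: $\nu_{D_\rho}(f)=0$ only says $f$ does not vanish identically on $O_\rho$, not that $f$ is zero-free there, and you never treat regular orbits of codimension $\ge 2$.) The paper avoids all of this: it characterizes the failure of (3) as the existence of a nonconstant unit of $\KK[\Xreg]$ (units of an $M$-graded domain are homogeneous) and reads off from Lemma~\ref{lemKXreg} that this happens exactly when $\gamma^\vee$ contains a nonzero subspace, i.e.\ exactly when (2) fails.

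In $(2)\Rightarrow(1)$ your item (b) is wrong as stated: the torus $L\cong(\KK^\times)^s$ contains no $\Ga$-subgroups, so one-parameter subgroups of $L$ contribute nothing to the span $W$ of $\Ga$-velocities --- indeed, if they did, every horospherical variety would be generically flexible, contradicting Example~\ref{eeeee}. The directions $\mathrm{T}_zLz$ are precisely what must be manufactured, and the paper does this by combining the Demazure-root LNDs $D_{h_i}$ (your item (c)) with one-parameter subgroups of $\widehat T\subseteq G'$ (whose velocities lie in $W$ because $G'$ is generated by $\Ga$-subgroups). Moreover, the step you flag as the ``principal obstacle'' is exactly the content of the paper's proof and you do not carry it out: one restricts the linear forms $\alpha_i=\langle\,\cdot\,,v_i\rangle$ for the extremal rays of $\gamma$ to $U=\mathfrak{X}(L)_\QQ\cap M_\QQ$, observes that condition (2) is equivalent to these restrictions spanning $U^*$, adjoins the forms coming from $\widehat T$ to span all of $M_\QQ^*$, and evaluates at a point $z\in\mathcal O$ with all $f_j(z)h_i(z)\neq0$. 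Finally, the passage from generic flexibility to flexibility is not just a citation of~\cite{GSh}: the paper reruns the $\ZZ$-grading argument of~\cite[Proposition~3]{GSh} for each face $\tau$ whose orbit consists of regular points, checking that normality is not used. As it stands your proposal is a plausible outline whose two hardest implications are either incorrect in their stated mechanism or left unproved.
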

\begin{proof}
Firstly we prove that the conditions (2) and (3) are equivalent. Indeed, for a regular function $f$ the condition $\mathbb{V}(f)\subseteq X^{\mathrm{sing}}$ is equivalent to $f\in \KK[\Xreg]^\times$. Since $\KK[\Xreg]$ is an $M$-homogeneous subalgebra of $\KK[X]$, the regular locus $\Xreg$ admits a nonconstant invertible regular function if and only if it admits a homogeneous one. By Lemma~\ref{lemKXreg}, a homogeneous invertible function on $X^\mathrm{reg}$ exists if and only if $\gamma^\vee$ contains a nonzero subspace. This is equivalent to (2).

We proceed by proving the implication (1)$\implies$(2). By~\cite[Lemma~4]{BG} flexibility of $X$ implies $\KK[\Xreg]^\times=\KK^\times$. Therefore, $\Xreg$ does not admit non-constant invertible regular functions. Hence, we obtain (2).

Now let us prove the implication (2)$\implies$(1). First of all we are going to prove that $X$ has a flexible point. 
As before we assume that $G=G'\times L$, where $G'$ is semisimple and $L\cong (\KK^\times)^s$. So, for every point $z\in\mathcal{O}$ the tangent space  $\mathrm{T}_zX$ is the sum of tangent spaces to the orbits $G'z$ and $Lz$: $$\mathrm{T}_zX=\mathrm{T}_zG'z+\mathrm{T}_zLz.$$ 
Let $W$ be the subspace of $\mathrm{T}_zX$ generated by tangent vectors to orbits of $\Ga$-actions. Since~$G'$ is semisimple, it is generated by $\Ga$-subgroups. Therefore, $\mathrm{T}_zG'z\subseteq W$. Now we need to show that $\mathrm{T}_zLz\subseteq W$.

Let $\rho_1,\ldots, \rho_k$ be all extremal rays of the regularity cone $\gamma=\gamma(X)$ and $v_1,\ldots, v_k\in N$ be primitive vectors on these rays. Recall that by $\widehat{T}$ we denote a maximal torus of the Borel subgroup $\widehat{B}\subseteq G'$. We denote 
$$\mathfrak{X}(L)\otimes_\ZZ\QQ\cap M_\QQ=U.$$ 

Each $v_i$ is a linear function $M\rightarrow \ZZ$. Let $\alpha_i$ be the corresponding linear function $M_\QQ\rightarrow \QQ$ and $\overline{\alpha}_i\colon U\rightarrow \QQ$ be the restriction of $\alpha_i$ to  $U$.
If $\rho_i$ is an extremal ray of~$\theta$, then $\overline{\alpha}_i$ is the zero function.

Assume that the dimension of  the linear span of functions $\overline{\alpha_1},\dots,\overline{\alpha_k}$ is less than $\dim U$. Then $\mathrm{dim}(\mathrm{Ker}\,\overline{\alpha_1}\cap \dots \cap \mathrm{Ker}\,\overline{\alpha_k})\geq 1$. On the other hand,  
$$
\mathrm{Ker}\,\overline{\alpha_1}\cap \dots \cap \mathrm{Ker}\,\overline{\alpha_k}\subseteq \gamma^\vee.
$$
This contradicts $(2)$. So, the dimension of  the linear span of functions $\overline{\alpha_1},\dots,\overline{\alpha_k}$ equals $\dim U$.

We can assume that $\rho_{d+1},\ldots, \rho_k$ are all extremal rays of $\theta$ among $\rho_{1},\ldots, \rho_k$. Then  $\rho_1,\ldots, \rho_d$ are extremal rays of $\sigma$ corresponding to $G$-orbits of codimension one consisting of regular points. By Lemma~\ref{lemregorb}, for each $1\leq i\leq d$ there exists a Demazure root $e_i$ with the distinguished ray $\rho_i$ and an LND $\delta_{e_i}$ of degree $e_i$. By Lemma~\ref{lemderu},  we can take $e_i\in\theta^\vee$. Then by Lemma~\ref{lemnext} we can fix $h_i\in S_{e_j}\setminus\{0\}$ and consider the LND $D_{h_i}$ of $\KK[X]$. This LND corresponds to a $\Ga$-subgroup, which we denote $\mathcal{H}_{h_i}$.

Fix homogeneous generators  $f_1,\dots,f_q$ of $\KK[X]$, let $f_j\in S_{\Lambda_j}$. This set of generators defines an embedding $X\hookrightarrow \mathbb{A}^q$. The tangent vector to the $\mathcal{H}_{h_i}$-orbit at a point $z$ has the form
\begin{equation}
    \label{eqHeitangent}
    (\langle\Lambda_1,v_i\rangle f_1(z)h_i(z),\dots, \langle\Lambda_q,v_i\rangle f_q(z)h_i(z))=h_i(z)({\alpha_i}(\Lambda_1)f_1(z),\dots, {\alpha_i}(\Lambda_q)f_q(z)).
\end{equation}

The tangent space $\mathrm{T}_zLz$ to the $L$-orbit at $z$ is generated by tangent vectors to one-parameter subgroups of $L$. Let $\omega(t)\subseteq T=L\times \widehat{T}$ be a one-parameter subgroup. Then $\omega$ corresponds to a vector $\beta$ in $\overline{N}=\mathrm{Hom}(\mathfrak{X}(T),\ZZ)$ and the tangent vector to $\omega(t)z$ at unit equals 
\begin{equation}
\label{eqbetatangent}
    (\beta(\Lambda_1)f_1(z)),\dots , \beta(\Lambda_q)f_q(z)).
\end{equation} 

Note that linear functions $\beta\colon\mathfrak{X}(T)=\mathfrak{X}(L)\oplus \mathfrak{X}(\widehat{T})\rightarrow \ZZ$, where $\beta$ corresponds to $\omega(t)\in\widehat{T}$, can be realized as compositions of the projection $\pi\colon\mathfrak{X}(T)\rightarrow \mathfrak{X}(\widehat{T})$ and a function $\overline{\beta}\colon \mathfrak{X}(\widehat{T})\rightarrow \ZZ$. Therefore, each $\beta|_{M_\QQ}$ equals zero on $U$. Since functions $\overline{\beta}$ form the whole lattice $\mathrm{Hom}(\mathfrak{X}(\widehat{T}),\ZZ)$, the dimension of the linear span of $\beta\mid_{M_\QQ}$ equals $\dim M_\QQ-\dim U$. Since the dimension of the  linear span of $\overline{\alpha_i}=\alpha_i|_U$ equals $\dim U$, we can conclude that the dimension of the linear span of $\alpha_1,\ldots, \alpha_d$ and all $\beta\mid_{M_\QQ}$ corresponding to $\omega(t)\in\widehat{T}$, equals $\dim M_\QQ$. 
Therefore, for each $\xi\in\overline{N}$ corresponding to a one-parameter subgroup $\omega(s)\in L$, we have that $\xi|_{M_\QQ}$ is a linear combination of $\overline{\alpha_i}$, $1\leq i\leq d$ and $\beta_j|_{M_\QQ}$. 

Let us take a point $z\in\mathcal{O}$ such that all $f_j(z)$ and all $h_i(z)$ are nonzero. Then the tangent vector to any orbit of one-parameter subgroup in $L$ at $z$ has the form 
\begin{equation*}
    (\xi(\Lambda_1)f_1(z)),\dots , \xi(\Lambda_q)f_q(z)).
\end{equation*} 
So, it is contained in the linear span of tangent vectors to $\Ga$-subgroups of the form (\ref{eqHeitangent}) and one-parameter subgroups in $\widehat{T}$ of the form (\ref{eqbetatangent}). That is $\mathrm{T}_zLz\subseteq W$. So, the point $z$ is flexible, i.e. $X$ is generically flexible. Since there is a flexible point in $\mathcal{O}$, all points in $\mathcal{O}$ are flexible.

Now we show that each regular point of $X$ is flexible. We are going to prove this similarly to the proof of the implication (3)$\implies$(2) of Lemma~\ref{lemregorb}. Suppose a proper face $\widehat{\tau}$ of the cone~$\sigma^{\vee}$ corresponds to an orbit $O_\tau$ consisting of regular points. There exists such a primitive integer vector $v\in \sigma$ that $\widehat{\tau}=\sigma^\vee\cap v^\bot$. Consider the following $\ZZ$-grading on $\KK[X]$:
$$\KK[X]=\bigoplus_{i\in\mathbb{Z}}\KK[X]_i=\bigoplus_{\langle \Lambda,v \rangle =i} S_\Lambda.$$ 
Homogeneous components with negative $i$ are zero. Applying~\cite[Proposition~3]{GSh} we obtain that the closure of the orbit $O_\tau$ is not $\SAut(X)$-invariant. Hence, there exists an automorphism in $\SAut(X)$ that takes a point from $O_\tau$ to a point in an orbit of bigger dimension. Therefore, there exists an automorphism in $\SAut(X)$, taking a point from $O_\tau$ to a point in~$\mathcal{O}$. So, all points in $O_\tau$ are flexible. Thus, all regular points of $X$ are flexible. Theorem~\ref{thmcrit} is proved.
\end{proof}

Let us show that all the previous results about flexibility for subclasses of the class of horospherical varieties can be obtained as particular cases of Theorem~\ref{thmcrit}.

\begin{itemize}
    \item In the case of a semisimple group $G$, we have $\mathcal{C}$ is a strictly convex cone. Therefore, the cone $\theta^\vee$ is strictly convex. So, $\theta$ is not contained in any hyperplane in $N_\QQ$. Since $\gamma\supseteq \theta$, we conclude that  $\gamma$ is not contained in any hyperplane in $N_\QQ$. Therefore, by Theorem~\ref{thmcrit}, $X$ is flexible. This result coincides with \cite[Theorem~2]{Sh-1}. A particular case of this result is~\cite[Theorem~0.2(1)]{AKZ-1}. 
    \item In the case of $G=L$ is a torus, we have that all extremal rays of $\sigma$ correspond to orbits of codimension one. Also assuming that $G$ acts effectively in this case we have $\mathcal{C}=\theta^\vee=\mathfrak{X}(L)\otimes_{\ZZ}\QQ=M_\QQ$. Therefore, $\theta$ is a point and $\gamma$ is generated by extremal rays of $\sigma$ corresponding to orbits consisting of regular points. These are such extremal rays $\rho$ that $\widehat{\rho}$ is almost saturated. In this case Theorem~\ref{thmcrit} gives assertions of Theorem~1, Corollary~1, and Corollary~2 from~\cite{BG}. In the case of a normal variety we obtain~\cite[Theorem~0.2(2)]{AKZ-1}. 
    \item In the case of a normal variety $X$ all orbits of codimension one consist of regular points. That is all extremal rays of $\sigma$ are either extremal rays of $\theta$ or significant. So, $\gamma=\sigma$. Therefore, Theorem~\ref{thmcrit} states that $X$ is flexible if and only if $\sigma$ is not contained in any hyperplane of $N_\QQ$. This is equivalent to the fact that $X$ has only constant invertible regular functions. This result coincides with~\cite[Theorem~3]{GSh}.
\end{itemize}

Since a finite set of holes does not effect on almost saturation of a face, we can state the following corollary similar to~\cite[Corollary~3]{BG}.
\begin{cor}
    Suppose the semigroup $\mathfrak{F}$ has finitely many holes. Then the variety $X$ is flexible if and only if $\sigma$ is not contained in any hyperplane of $N_\QQ$. 
\end{cor}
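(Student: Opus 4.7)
The strategy is to derive the corollary from Theorem~\ref{thmcrit}, which characterizes flexibility by the regularity cone $\gamma$ not being contained in any hyperplane of $N_\QQ$. Since $\gamma\subseteq\sigma$ by construction, the forward implication ``$X$ flexible $\Longrightarrow$ $\sigma$ not in any hyperplane'' is immediate. For the converse, assume $\sigma$ has full dimension---equivalently, $\sigma^\vee$ is strictly convex (pointed)---and aim to prove $\gamma=\sigma$, whence Theorem~\ref{thmcrit} gives flexibility.

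To show $\gamma=\sigma$, I check that every extremal ray $\rho$ of $\sigma$ lies in $\gamma$. If $\rho\subseteq\theta$, then $\rho\subseteq\gamma$ by definition. Otherwise $\rho$ is not an extremal ray of $\theta$, so by Lemma~\ref{arl} the corresponding $G$-orbit $O_\rho$ has codimension one, and by Lemma~\ref{lemregorb} it suffices to show that the facet $\widehat{\rho}\preceq\sigma^\vee$ is almost saturated. Let $h_1,\dots,h_n$ be the holes of $\mathfrak{F}$. A point $m\in\widehat{\rho}\cap\mathfrak{F}$ fails to be a saturation point exactly when some $h_i$ lies in $m+\sigma^\vee$, i.e.\ when $m\in h_i-\sigma^\vee$; so the ``bad'' set of non-saturation points in $\widehat{\rho}$ is $\bigcup_i\bigl(\widehat{\rho}\cap(h_i-\sigma^\vee)\bigr)$.

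The key observation is that strict convexity of $\sigma^\vee$ forces each $\widehat{\rho}\cap(h_i-\sigma^\vee)$ to be bounded: this set is contained in the ``order interval'' $\{m\in\sigma^\vee:h_i-m\in\sigma^\vee\}$, and such intervals are bounded in a pointed cone. Hence only finitely many lattice points are bad. On the other hand $\widehat{\rho}$ has dimension $\dim\sigma^\vee-1\geq 1$, so $\widehat{\rho}\cap M$ is infinite; since $\widehat{\rho}\cap\mathfrak{F}_{sat}=\widehat{\rho}\cap M$ while $\mathfrak{F}$ differs from $\mathfrak{F}_{sat}$ by only finitely many holes, the set $\widehat{\rho}\cap\mathfrak{F}$ is also infinite. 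Choosing any $m$ in this infinite set outside the finite bad union yields a saturation point of $\mathfrak{F}$ in $\widehat{\rho}$, establishing almost saturation. The one nontrivial step is this boundedness observation; it relies entirely on ``$\sigma^\vee$ is pointed'', which is exactly the hypothesis ``$\sigma$ is not contained in any hyperplane''.
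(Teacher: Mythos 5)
Your proof is correct and follows essentially the same route as the paper: the paper's one-line argument asserts that a finite set of holes does not affect almost saturation of the facets $\widehat{\rho}$, so that $\gamma=\sigma$, and then invokes Theorem~\ref{thmcrit}; you simply supply the justification for that assertion (boundedness of $\widehat{\rho}\cap(h_i-\sigma^\vee)$ when $\sigma^\vee$ is pointed, hence only finitely many non-saturation points on each facet). The only caveat is that your step ``$\dim\widehat{\rho}=\dim\sigma^\vee-1\geq 1$'' tacitly assumes $\dim\sigma^\vee\geq 2$, but the paper's own proof relies on the same tacit assumption.
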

\begin{proof}
    The cone $\gamma$ obtained from the semigroup $\mathfrak{F}$ coincides with cone $\gamma$ obtained from the semigroup $\mathfrak{F}_{sat}$ and coincides with $\sigma$.  
\end{proof}

This corollary can be reformulated as follows. 
\begin{cor}
    Suppose the semigroup $\mathfrak{F}$ has a finite number of holes. Then the variety $X$ is flexible if and only if it admits only constant invertible regular functions. 
\end{cor}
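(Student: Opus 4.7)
The plan is to reduce the stated corollary to the previous one by establishing the equivalence
\[
\sigma \text{ is not contained in any hyperplane of } N_\QQ \iff \KK[X]^\times = \KK^\times
\]
under the assumption that $\mathfrak{F}$ has finitely many holes. Once this is done, the corollary follows immediately from the previous corollary.

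For the easy direction, I would argue that if $\sigma$ spans $N_\QQ$, then the dual cone $\sigma^\vee$ is strictly convex. Choosing an integral vector $u$ in the interior of $\sigma$, the pairing $\langle u, -\rangle$ gives a $\ZZ_{\geq 0}$-grading on $\KK[X] = \bigoplus_{\Lambda \in \mathfrak{F}} S_\Lambda$, with degree-zero part equal to $S_0$ by strict convexity of $\sigma^\vee$. In any $\ZZ_{\geq 0}$-graded domain, all invertible elements lie in degree zero, so $\KK[X]^\times \subseteq S_0$. Now $S_0$ consists of $B$-invariant regular functions on $G$, and since $G/B$ is complete and connected, $S_0 = \KK$. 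Hence $\KK[X]^\times = \KK^\times$. This direction does not require the finite-holes hypothesis.

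For the reverse direction, which is where the finite-holes hypothesis comes in, I would assume $\sigma$ is contained in a hyperplane. Then the lineality space $\sigma^\vee \cap (-\sigma^\vee)$ of $\sigma^\vee$ is nontrivial, and it contains some nonzero $\Lambda_0 \in M$. Both $N\Lambda_0$ and $-N\Lambda_0$ lie in $\mathfrak{F}_{sat}$ for every positive integer $N$, and since the hole set $\mathfrak{F}_{sat} \setminus \mathfrak{F}$ is finite, there exists $N$ large enough so that $N\Lambda_0, -N\Lambda_0 \in \mathfrak{F}$. Picking nonzero $f \in S_{N\Lambda_0}$ and $g \in S_{-N\Lambda_0}$, the product $fg$ lies in $S_0 = \KK$ and is nonzero because $\KK[X]$ is a domain. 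Thus $f$ is an invertible element of $\KK[X]$, and it is non-constant since $N\Lambda_0 \neq 0$. This contradicts $\KK[X]^\times = \KK^\times$.

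The argument is entirely elementary; the only step requiring mild care is the identification $S_0 = \KK$ (using $B$-invariance on $G$) and the standard fact that a $\ZZ_{\geq 0}$-graded domain has all units in degree zero. Combining the established equivalence with the previous corollary (which gave flexibility $\iff$ $\sigma$ spans $N_\QQ$ under finitely many holes) yields the desired statement.
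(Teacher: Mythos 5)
Your proof is correct and takes essentially the same route as the paper: the paper presents this corollary as a mere reformulation of the preceding one via the equivalence ($\sigma$ spans $N_\QQ$) $\iff$ ($\KK[X]^\times=\KK^\times$), which it leaves implicit and you prove in full. Both directions of your argument are sound (units of a $\ZZ_{\geq 0}$-graded domain lie in degree zero, $S_0=\KK$ since $G/B$ is complete and connected, and the finite-holes hypothesis is invoked exactly where it is needed, namely to place $\pm N\Lambda_0$ in $\mathfrak{F}$).
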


We finish the paper by considering two examples of horospherical $\mathrm{SL}_2(\KK)\times\KK^\times$-varieties. Let us consider the following $\mathrm{SL}_2(\KK)\times\KK^\times$-actions on $\KK[x,y,z]$:
$$
\left(\begin{pmatrix}
    a&b\\
    c&d
\end{pmatrix}, t\right)\cdot (x,y,z)=(ax+by,cx+dy, tz). 
$$

\begin{example}\label{eeeee}
    Let $X$ be the orbit closure $\overline{(\mathrm{SL}_2(\KK)\times\KK^\times)\cdot(x^2,xz, z)}$. Since 
    $$U=\left\{\begin{pmatrix}
        1&0\\
        c&1
    \end{pmatrix}\right\}$$ 
    is contained in the stabilizer of $(x^2,xz, z)$, this variety is a complexity-zero horospherical variety of the group $\mathrm{SL}_2(\KK)\times\KK^\times$. Let us consider the following basis in $\mathfrak{X}(B)$, where $\widehat{B}$ is the subgroup of upper triangular matrices: the first vector is the unique fundamental weight of $\widehat{B}$ and the second is the character $(A,t)\mapsto t$. Then $\mathcal{C}=\mathrm{cone}((1,0),(0,1),(0,-1))$ and~$\mathfrak{F}$ is generated by $(2,0), (1,1)$ and $(0,1)$. Therefore, 
    $$M=\ZZ^2,\qquad N=\ZZ^2,\qquad \theta^\vee=\mathcal{C},\qquad \theta=\mathrm{cone}((1,0)), $$ 
    $$\sigma^{\vee}=\mathrm{cone}((1,0),(0,1)), \qquad\sigma=\mathrm{cone}((1,0),(0,1)).$$ 
    It is easy to see that $\widehat{\rho_2}=\mathrm{cone}((1,0))$ is a nowhere saturated face of $\sigma^\vee$ and  $\widehat{\rho_1}=\mathrm{cone}((0,1))$ corresponds to the orbit $O_{\rho_1}$ of codimension two. Therefore, $\gamma=\theta=\mathrm{cone}((1,0))$. Hence, by Theorem~\ref{thmcrit}, the variety $X$ is not flexible.

\begin{figure}[h]
\begin{minipage}[h]{0.45\linewidth}
\center{\includegraphics[width=0.55\linewidth]{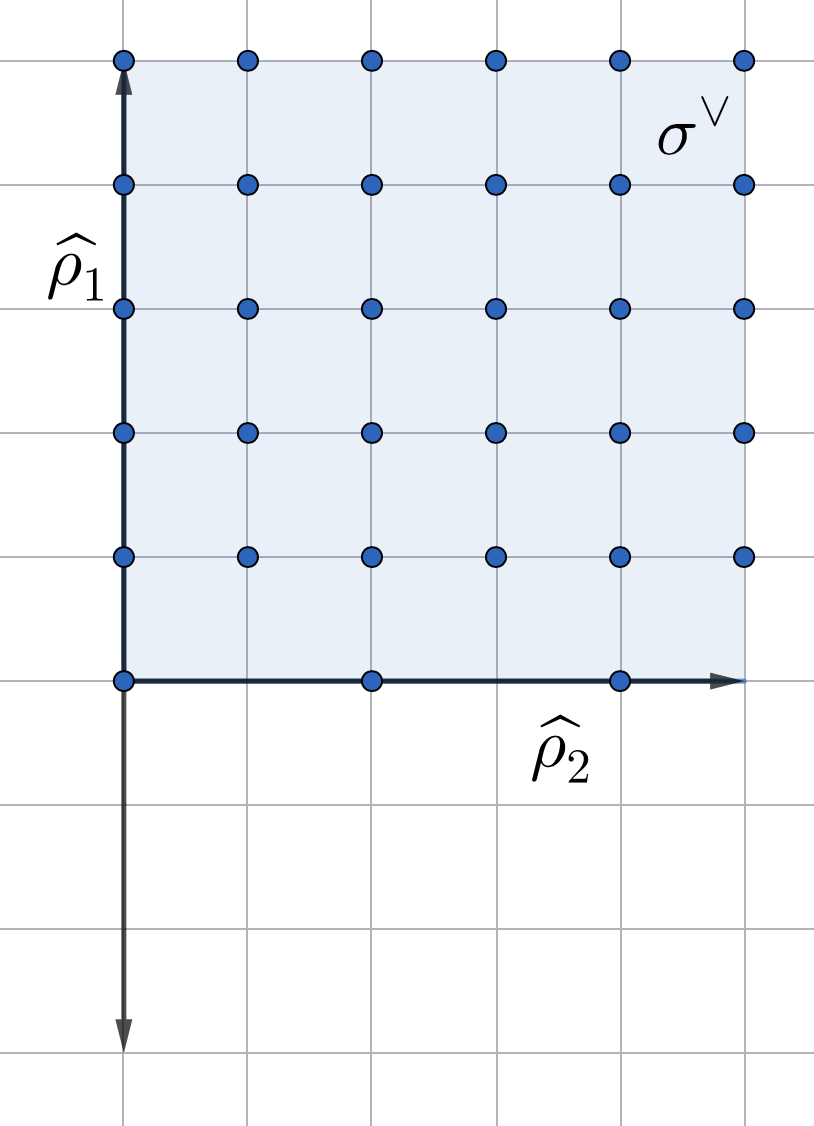}} \\
\end{minipage}
\hfill
\begin{minipage}[h]{0.45\linewidth}
\center{\includegraphics[width=0.55\linewidth]{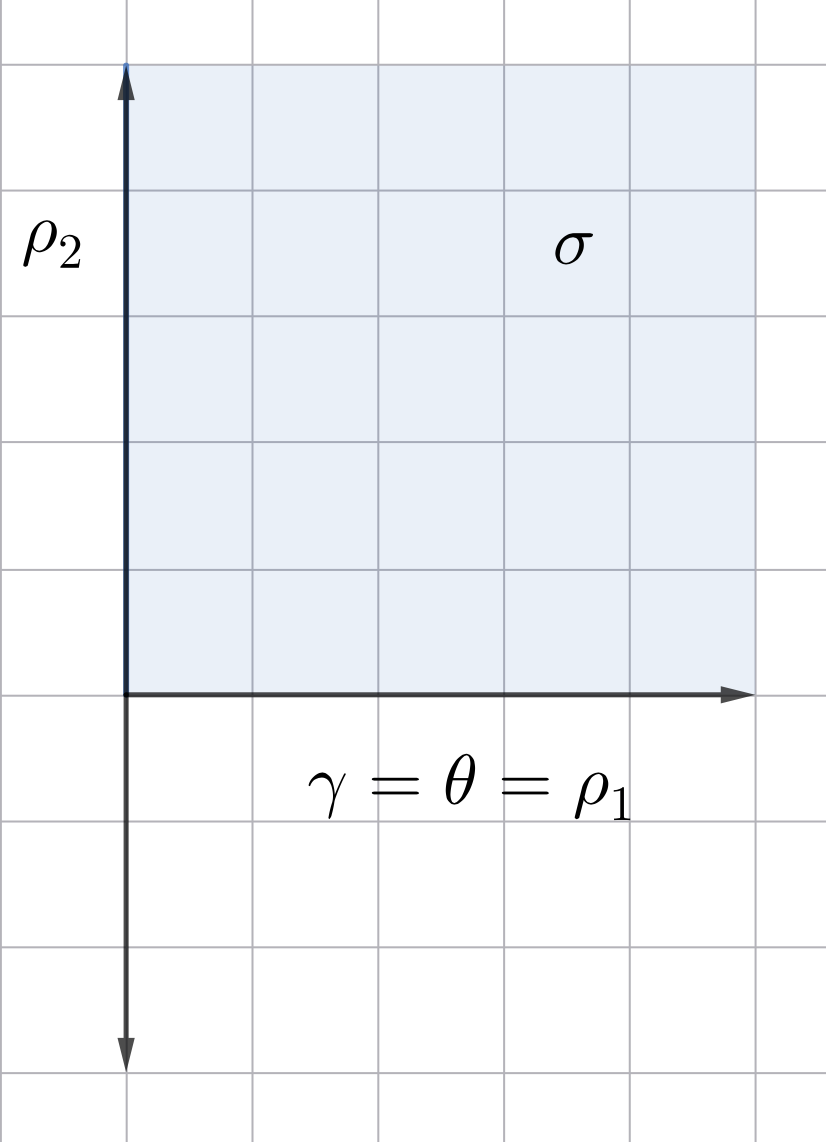}} 
\end{minipage}
\end{figure}
\end{example}

\begin{example}
Under the notation of Example~\ref{eeeee} let $Y=\overline{(\mathrm{SL}_2(\KK)\times\KK^\times)\cdot(x^2,xz, xz^2)}$. Then~$\mathfrak{F}$ is generated by $(2,0), (1,1)$ and $(1,2)$. Therefore, 
$$M=\ZZ^2,\qquad N=\ZZ^2,\qquad \theta^\vee=\mathcal{C}, \qquad\theta=\mathrm{cone}((1,0)),$$ 
$$\sigma^{\vee}=\mathrm{cone}((1,0),(1,2)),\qquad \sigma=\mathrm{cone}((2,-1),(0,1)).$$ 
It is easy to see that $\widehat{\rho_1}=\mathrm{cone}((1,2))$ is almost saturated and  $\widehat{\rho_2}=\mathrm{cone}((1,0))$ is nowhere saturated. Therefore, $\rho_1=\mathrm{cone}((2,-1))$ is a significant  extremal ray and $\rho_2=\mathrm{cone}((0,1))$ is not. Hence, $\gamma=\mathrm{cone}((1,0), (2,-1))$. By Theorem~\ref{thmcrit}, the variety $X$ is flexible.
\begin{figure}[h]
\begin{minipage}[h]{0.45\linewidth}
\center{\includegraphics[width=0.55\linewidth]{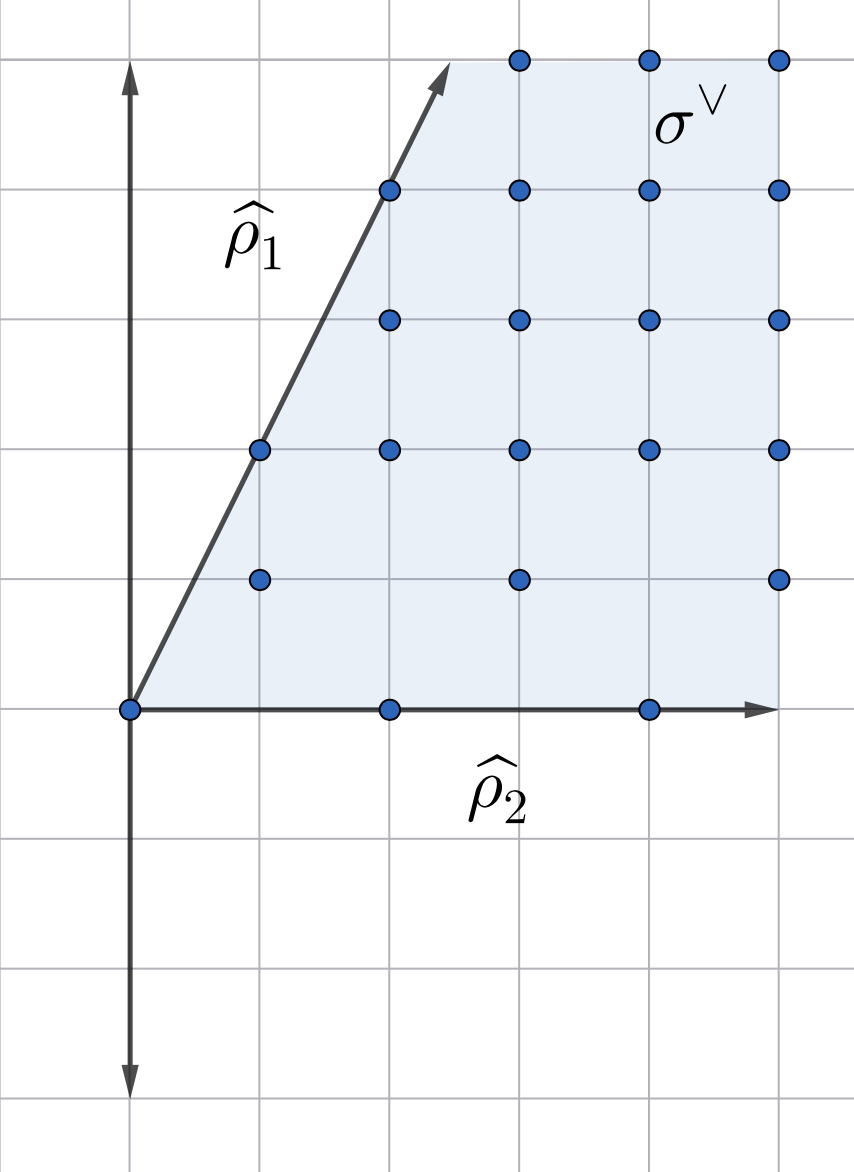}} \\
\end{minipage}
\hfill
\begin{minipage}[h]{0.45\linewidth}
\center{\includegraphics[width=0.55\linewidth]{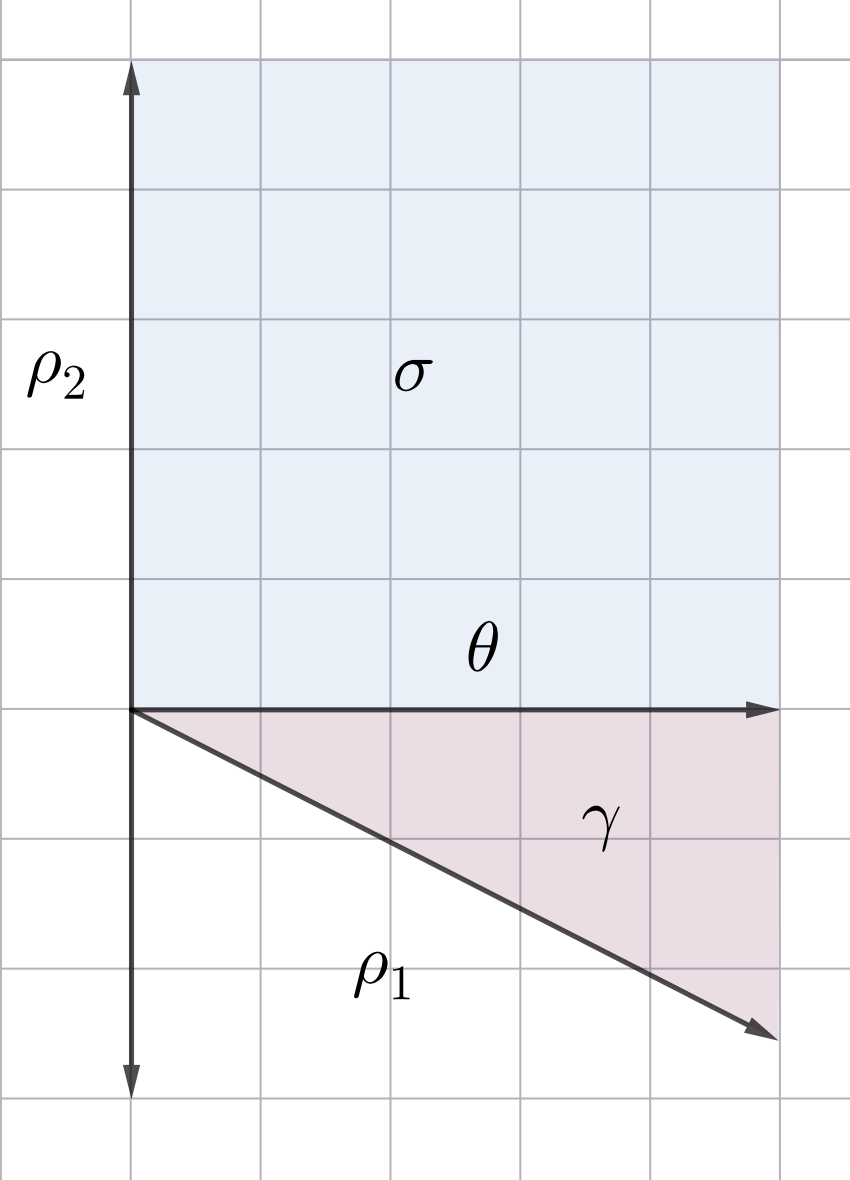}} 
\end{minipage}
\end{figure}
\end{example}


\begin{thebibliography}{99}

\bibitem{A-8}
Ivan Arzhantsev. {\it Automorphisms of algebraic varieties and infinite transitivity}. St. Petersburg Math.~J.~34 (2023), no.~2, 143-178

\bibitem{AA} Ivan Arzhantsev and Roman Avdeev. {\it Root subgroups on affine spherical varieties}. Selecta Math. (N.S.)~28, (2022) no.~3, article 60

\bibitem{AFKKZ-1} 
Ivan Arzhantsev, Hubert Flenner, Shulim Kaliman, Frank Kutzschebauch, and Mikhail Zaidenberg. {\it Flexible varieties and automorphism groups}. Duke Math. J.~162 (2013), no.~4, 767-823

\bibitem{AKZ-1}
Ivan Arzhantsev, Karine Kuyumzhiyan, and Mikhail Zaidenberg.  {\it Flag varieties, toric varieties, and suspensions: three examples of infinite transitivity}.  Sb.~Math.~203 (2012), no. 7-8, 923-949

\bibitem{APS} Ivan Arzhantsev, Alexander Perepechko, and Hendrik S\"uss. {\it Infinite transitivity on universal torsors}. J. London Math. Soc.
(2) 89 (2014), no. 3, 762-778

\bibitem{BKK} Fedor Bogomolov, Ilya Karzhemanov, and Karine Kuyumzhiyan. {\it Unirationality and existence of infinitely transitive models}. Birational Geometry, Rational Curves, and Arithmetic, Simons Symp., Springer, Cham, (2013), 77-92

\bibitem{BG}
Ilya Boldyrev and Sergey Gaifullin. {\it Automorphisms of nonnormal toric varieties}. Math. Notes~110 (2021), no.~6, 872-886


\bibitem{BGS}
Viktoriia Borovik, Sergey Gaifullin, and Anton Shafarevich. {\it On orbits of automorphism groups on horospherical varieties}. Math. Nachr. 297 (2024), 3174–3183

\bibitem{D} Adrien Dubouloz. {\it Flexible bundles over rigid affine surfaces}. Comment. Math. Helv. 90 (2015), no. 1,
121-137

\bibitem{FKZ} Hubert Flenner, Shulim Kaliman, and Mikhail Zaidenberg. {\it The Gromov–Winkelmann theorem for flexible varieties}.
J. Eur. Math. Soc. (JEMS) 18 (2016), no. 11, 2483-2510

\bibitem{Fr}
Gene Freudenburg. \emph{Algebraic theory of locally nilpotent derivations}. Encyclopaedia Math. Sci.~136, Springer-Verlag, Berlin, Heidelberg, 2017

\bibitem{G} Sergey Gaifullin. {\it On rigidity of trinomial hypersurfaces and factorial trinomial varieties}. arXiv:1902.06136

\bibitem{G2} Sergey Gaifullin. {\it Generically flexible affine varieties with invariant divisors}. arXiv:2507.14745

\bibitem{GSh} 
Sergey Gaifullin and Anton Shafarevich. {\it Flexibility of normal affine horospherical varieties}. Proc. Amer. Math. Soc. 147 (2019), no.~8, 3317-3330

\bibitem{Ko} Sergei Kovalenko. {\it Transitivity of Automorphism Groups of Gizatullin Surfaces}. Int. Math. Res. Not. IMRN, 2015 (2015), no. 21, 1433–11484

\bibitem{K} Karine Kuyumzhiyan. {\it Infinite transitivity for Calogero–Moser spaces}. Proc. Amer. Math. Soc. 148 (2020),
no. 9, 3723-3731

\bibitem{KM} Karine Kuyumzhiyan and Frédéric Mangolte. {\it Infinitely transitive actions on real affine suspensions}. J. Pure Appl.
Algebra 216 (2012), no. 10, 2106-2112

\bibitem{MPS} Matheusz Michałek, Alexander Perepechko and Hendrik S\"uss. {\it Flexible affine cones and flexible coverings}. Math. Z. 290 (2018), no. 3-4, 1457-1478

\bibitem{PW}  Jihun Park and Joonyeong Won. {\it Flexible affine cones over del Pezzo surfaces of degree 4}. Eur. J. Math. 2 (2016), 304-318

\bibitem{P1} Alexander Perepechko. {\it Flexibility of affine cones over del Pezzo surfaces of degree 4 and 5}. Funct. Anal. Appl. 47 (2013), no. 4, 284-289

 \bibitem{P2} Alexander Perepechko. {\it Affine cones over cubic surfaces are flexible in codimension one}. Forum Math. 33 (2021), no. 2, 339-348

\bibitem{PV}
Vladimir Popov and Ernest Vinberg. {\it On a class of quasihomogeneous affine varieties}. Math. USSR-Izv. 6 (1972), no.~4, 743–758

\bibitem{Re} Rudolf Rentschler. {\it Op\'erations du groupe additif sur le plan affine}. C. R. Acad. Sc. Paris  267 (1968) 384-387

\bibitem{Sh-1}
Anton Shafarevich. {\it Flexibility of affine horospherical varieties of semisimple groups}. Sb. Math. 208 (2017), no.~2, 285-310

\bibitem{TY} Akimichi Takemura and Ruriko Yoshida. {\it Saturation points on faces of a rational polyhedral cone}. Integer Points in Polyhedra-Geometry, Number Theory, Representation Theory, Algebra, Optimization, Statistics, Contemp. Math., 452, Amer. Math. Soc., Providence, RI, (2008), 147–162

\bibitem{Ti-2}
Dmitry Timashev. \emph{Homogeneous Spaces and Equivariant Embeddings}. In: Invariant Theory and Algebraic Transformation Groups~8, Encyclopaedia Math. Sci.~138, Springer-Verlag, Heidelberg, 2011

\end{thebibliography}
\end{document}